

\documentclass[12pt]{elsarticle}




\usepackage{amsmath,amssymb,amsthm}

\usepackage{color}

\newtheorem{theorem}{Theorem}[section]
\newtheorem{lemma}{Lemma}[section]

\newtheorem{remark}{Remark}[section]

\theoremstyle{definition}
\newtheorem{definition}{Definition}[section]

\theoremstyle{remark}

\numberwithin{equation}{section}

\usepackage[margin=2.2cm]{geometry}

\newcommand{\R}{\mathbf{R}}
\newcommand{\N}{\mathbf{N}}
\newcommand{\loc}{\textrm{loc}}
\newcommand{\RN}{\mathbf{R}^N}

\def\polhk#1{\setbox0=\hbox{#1}{\ooalign{\hidewidth
    \lower1.5ex\hbox{`}\hidewidth\crcr\unhbox0}}}

\journal{some journal} 

\begin{document}

\begin{frontmatter}



\title{Optimal singularities of initial data for solvability \\
of the Hardy parabolic equation}


\author[adsci]{Kotaro Hisa\corref{cor1}}
\ead{khisa@ms.u-tokyo.ac.jp}

\author[adtitech]{Jin Takahashi}
\ead{takahashi@c.titech.ac.jp}

\address[adsci]{
Graduate School of Mathematical Sciences, The University of Tokyo, \\
3-8-1 Komaba, Meguro-ku, Tokyo 153-8914, Japan 
}

\address[adtitech]{
Department of Mathematical and Computing Science, Tokyo Institute of Technology, \\
2-12-1 Ookayama, Meguro-ku, Tokyo 152-8552, Japan
}

\cortext[cor1]{Corresponding author}

\begin{abstract}
We consider the Cauchy problem for the Hardy parabolic equation 
$\partial_t u-\Delta u=|x|^{-\gamma}u^p$ 
with initial data $u_0$ singular at some point $z$. 
Our main results show that, if $z\neq 0$, then 
the optimal strength of the singularity of $u_0$ at $z$ 
for the solvability of the equation 
is the same as that of the Fujita equation $\partial_t u-\Delta u=u^p$. 
Moreover, if $z=0$, then the optimal singularity 
for the Hardy parabolic equation is weaker than 
that of the Fujita equation. 
We also obtain analogous results for 
a fractional case $\partial_t u+(-\Delta)^{\theta/2} u=|x|^{-\gamma}u^p$ 
with $0<\theta<2$. 
\end{abstract}

\begin{keyword}
Hardy parabolic equation \sep solvability \sep optimal singularity
\MSC[2020] 35K58 
\sep 35A01 
\sep 35K15 
\sep 35K67 
\end{keyword}
\end{frontmatter}



\section{Introduction}
We consider the Cauchy problem for the Hardy parabolic equation 
\begin{equation}\label{eq:main}
\left\{ 
\begin{aligned}
	&\partial_t u-\Delta u=|x|^{-\gamma}u^p
	&&\mbox{ in }\R^N\times(0,T), \\
	&u(\cdot,0)=u_0 &&\mbox{ in }\R^N, 
\end{aligned}
\right.
\end{equation}
where $N\geq1$, $p>1$, $0<\gamma<\min\{2,N\}$ and $0<T\leq \infty$. 
We assume that $u_0$ is nonnegative and has a singularity at some point $z\in \R^N$. 
The aim of this paper is to identify 
the optimal strength of the singularity at $z$ 
for the local-in-time solvability of \eqref{eq:main}. 
Intuitively, such an optimal singularity is determined by 
the diffusion effect and the growth rate of the nonlinear term near $z$. 
Hence, if $z\neq0$, it is expected that the optimal singularity of \eqref{eq:main} 
is the same as that of the Fujita equation $\partial_t u-\Delta u=u^p$. 
On the other hand, if $z=0$, the optimal singularity 
should be weaker than that of the Fujita equation. 
In this paper, we show that these expectations are indeed correct. 
Furthermore, we also give analogous results for 
a fractional case $\partial_t u+(-\Delta)^{\theta/2} u=|x|^{-\gamma}u^p$ 
with $0<\theta<2$, see Section \ref{sec:frac} below.

We recall some results for the problem \eqref{eq:main}. 
In what follows, set
\[
	p_F:= 1+\frac{2}{N}, \qquad 
	p_\gamma:=1+\frac{2-\gamma}{N}. 
\]
Ben Slimene-Tayachi-Weissler \cite{STW} obtained 
sufficient conditions for the solvability of \eqref{eq:main} in $L^q$ spaces. 
They also obtained a global-in-time solution under 
$u_0(x)\leq c |x|^{-(2-\gamma)/(p-1)}$ 
if $p>p_\gamma$ and $c>0$ small. 
For related results, 
see \cite{AT05,S19, N19, NIT, FT00, H08, MM21, P13, Q98, Tayachi, W}. 
Subsequently, the first author and Sier\.{z}\polhk ega \cite{HS} examined 
necessary conditions of initial data for the existence of solutions including 
a fractional case. 
Their results imply that \eqref{eq:main} 
possesses no local-in-time solutions if $u_0$ satisfies either 
\begin{equation}\label{eq:OShd}
	u_0(x)\geq
	\left\{
	\begin{aligned}
	&  C_*|x|^{-N}\left(\log(e+|x|^{-1})\right)^{-\frac{N}{2-\gamma}-1} 
	&& \mbox{ if } \quad \displaystyle{p=p_\gamma}, \\
	& C_*|x|^{-\frac{2-\gamma}{p-1}} 
	&& \mbox{ if } \quad \displaystyle{p>p_\gamma} 
	\end{aligned}
	\right.
\end{equation}
near $x=0$, or 
\begin{equation}\label{eq:OSfjt}
u_0(x)\geq
\left\{ 
\begin{aligned}
	& C_* |z|^\frac{\gamma}{p-1} |x-z|^{-N}
	\left(\log(e+|x-z|^{-1})\right)^{-\frac{N}{2}-1} &&\mbox{ if } p=p_F, \\
	& C_* |z|^\frac{\gamma}{p-1} |x-z|^{-\frac{2}{p-1}} &&\mbox{ if } p>p_F 
\end{aligned}
\right.
\end{equation}
near $x=z$ ($z\neq0$), where 
$C_*>0$ is sufficiently large. 
We note that 
$|x-z|^{-N}(\log(e+|x-z|^{-1}))^{-(N/2)-1}$ and $|x-z|^{-2/(p-1)}$ 
are the same as the optimal singularities 
of the Fujita equation for each case, see \cite{BP85,HI18}. 
In addition, when $u_0=\delta_z$ with $z\in \RN$, 
they \cite{HS} also show that 
the problem \eqref{eq:main} possesses no local-in-time solutions if either 
$p_\gamma\leq p < p_F$ and $z=0$, or $p\geq p_F$ and $z\in\RN$. 
Here $\delta_z$ is the Dirac measure on $\R^N$ concentrated at $z\in\RN$.

The above results in \cite{STW,HS} imply that 
$|x|^{-(2-\gamma)/(p-1)}$ is the optimal singularity of 
the Hardy parabolic equation \eqref{eq:main} in the case where $p>p_\gamma$ and 
the singularity of $u_0$ is located at the origin. 
However, the optimality of the other cases is still open. 
In this paper, we derive sufficient conditions 
for the existence of solutions 
corresponding to \eqref{eq:OShd} and \eqref{eq:OSfjt} 
and specify the optimal singularities of \eqref{eq:main}. 

In order to state our results, we introduce some notation. 
Set 
\begin{equation}\label{eq:psizdef}
	\psi(z):=
	\left\{ 
	\begin{aligned}
	&|z|^\frac{2}{p-1}(1+|z|)^{-\frac{2-\gamma}{p-1}}
	&&\mbox{ if $p<p_F$ with $N\geq1$ or $p=p_F$ with $N\leq 2$}, \\
	&|z|^\frac{\gamma}{p-1}  
	&&\mbox{ if $p=p_F$ with $N>2$ or $p>p_F$ with $N\geq1$}. 
	\end{aligned}
	\right.
\end{equation}
We regard a nonnegative Radon measure $\mu=C\delta_z+\phi$ 
with $\phi\in L^\infty(\R^N)$ as $d\mu(x)=Cd\delta_z(x)+\phi dx$.
We denote by $\chi_1$ the characteristic function on the interval $[0,1]$. 
Each of the solutions in our results is the so-called integral solution, 
see Definition \ref{def:sol} below.  

Our first result shows that if the singularity of $u_0$ is not located at the origin, 
then the optimal singularity of \eqref{eq:main} 
is the same as that of the Fujita equation.

\begin{theorem}\label{th:main}
Fix $N\geq1$, $p>1$, $0<\gamma<\min\{2,N\}$, $0<T<\infty$ and 
$z\in \R^N\setminus\{0\}$. 
Let $\psi$ be as in \eqref{eq:psizdef}. 
Assume either $u_0$ is a nonnegative Radon measure satisfying 
\[
	u_0=c \psi(z) \delta_z + \phi\qquad \mbox{ if }p<p_F 
\]
for a nonnegative function $\phi\in L^\infty(\R^N)$ 
with $\|\phi\|_{L^\infty(\R^N)}\leq C_0$, 
or $u_0$ is a nonnegative measurable function satisfying
\[
	u_0(x)\leq \left\{ 
	\begin{aligned}
	& c \psi(z) |x-z|^{-N} 
	\left( \log ( e+|x-z|^{-1} ) \right)^{-\frac{N}{2}-1}
	\chi_1(|x-z|) + C_0 
	&&\mbox{ if }p=p_F, \\
	& c \psi(z) |x-z|^{-\frac{2}{p-1}} + C_0 
	&&\mbox{ if }p>p_F \\
	\end{aligned}
	\right.
\]
for any $x\in \R^N\setminus\{z\}$. 
Here $c>0$ and $C_0\geq0$ are constants. 
Then there exist positive constants $c_*$ and $C_*$ 
depending on $N$, $p$ and $\gamma$ but not on $T$ and $z$ 
such that the following statements hold. 
If the constants $c$ and $C_0$ satisfy 
\[
	c\leq 
	\left\{
	\begin{aligned} 
	&c_* T^{-\frac{N(p_F-p)}{2(p-1)}} (1+T^{1-\frac{\gamma}{2}})^{-\frac{1}{p-1}} 
	&&\mbox{ if }p<p_F, \\
	&c_* (1+T^\frac{1}{2}+T^{1-\frac{\gamma}{2}})^{-\frac{1}{p-1}} 
	&&\mbox{ if }p=p_F, \\
	&c_* 
	&&\mbox{ if }p>p_F, 
	\end{aligned} 
	\right.
	\qquad 
	C_0 \leq C_* T^{-\frac{2-\gamma}{2(p-1)}}, 
\]
respectively, then \eqref{eq:main} possesses a solution on $\R^N\times[0,T)$. 
In addition, if $p>p_F$, $c\leq c_*$ and $C_0=0$, 
then \eqref{eq:main} possesses a solution on $\R^N\times[0,\infty)$. 
\end{theorem}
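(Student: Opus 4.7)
My plan is to prove existence by monotone iteration for the mild formulation
\[
u(t) = e^{t\Delta} u_0 + \int_0^t e^{(t-s)\Delta}\bigl[|x|^{-\gamma} u(s)^p\bigr]\,ds,
\]
building the integral solution as the increasing limit of the sequence $u_1 := e^{t\Delta}u_0$ (interpreted as the heat extension of the Radon measure $u_0$) and $u_{n+1} := u_1 + \int_0^t e^{(t-s)\Delta}[|x|^{-\gamma} u_n^p]\,ds$. Pointwise monotonicity of $\{u_n\}$ is automatic since $u\mapsto u^p$ is monotone on $[0,\infty)$, so the heart of the argument is to produce a barrier $V(x,t)$ satisfying $u_n \leq V$ for every $n$; the limit is then finite and one passes to the limit under the integral by monotone convergence.

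\textbf{Construction of the barrier.} I would first split $u_0 = u_0^{\mathrm{sing}} + u_0^{\mathrm{bdd}}$ with $\|u_0^{\mathrm{bdd}}\|_\infty \leq C_0$ and seek a barrier of the form $V(x,t) = A\,W(x,t) + B(t)$, where $W$ is tailored to $u_0^{\mathrm{sing}}$ and $B$ absorbs $C_0$ together with its nonlinear feedback. For $p<p_F$, or $p=p_F$ with $N\leq 2$, $W$ should be a weighted Gaussian centered at $z$: the scaling factor $\psi(z) = |z|^{2/(p-1)}(1+|z|)^{-(2-\gamma)/(p-1)}$ is chosen precisely so that $|x|^{-\gamma}(e^{t\Delta}\delta_z)^p$ is integrable in the Duhamel sense against the barrier, uniformly in $t\in(0,T)$. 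For $p=p_F$ with $N>2$ and for $p>p_F$, I would instead use a Fujita-type singular profile $W(x,t)= (|x-z|^2+t)^{-1/(p-1)}$ (modified by a logarithmic factor $(\log(e+(|x-z|^2+t)^{-1/2}))^{-N/2-1}$ in the critical case), normalized by the weight $\psi(z) = |z|^{\gamma/(p-1)}$; this weight is exactly what absorbs $|x|^{-\gamma} \lesssim |z|^{-\gamma}$ near $z$ and reduces the nonlinear term to the classical Fujita estimate.

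\textbf{Key estimate and main obstacle.} The crux is a nonlinear smoothing inequality of the form
\[
\int_0^t e^{(t-s)\Delta}\bigl[|x|^{-\gamma} W(\cdot,s)^p\bigr](x)\,ds \;\leq\; \Lambda\, W(x,t), \qquad (x,t)\in\R^N\times(0,T),
\]
with an explicit constant $\Lambda = \Lambda(N,p,\gamma,T,z)$; combined with the linear bound $e^{t\Delta} u_0^{\mathrm{sing}} \leq A W$, this closes the iteration in a contraction fashion once $\Lambda A^{p-1}$ is sufficiently small, which is the origin of the smallness of $c$ and of its exact $T$-dependence when $p\leq p_F$. The control of $C_0$ is analogous but easier: the estimate $\int_0^t e^{(t-s)\Delta}|x|^{-\gamma}\,ds \lesssim t^{1-\gamma/2}$ forces $C_0^{p-1} T^{1-\gamma/2}$ to be small, which matches the stated bound $C_0 \leq C_* T^{-(2-\gamma)/(2(p-1))}$. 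I expect the principal technical difficulty to lie in establishing the nonlinear smoothing inequality uniformly in $z\neq 0$: one must split $\R^N$ into a ball of radius $|z|/2$ around $z$, where $|x|^{-\gamma}\asymp |z|^{-\gamma}$ and the computation reduces to a Fujita-type Duhamel estimate, and its complement, where the Hardy weight is large near the origin but $W$ is correspondingly small, so the two regions must be balanced via Gaussian off-diagonal bounds. The most delicate case is the critical $p=p_F$, where logarithmic gains have to match exactly on both sides of the inequality.

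\textbf{Global existence for $p>p_F$.} When $p>p_F$ and $C_0=0$, the profile $W(x,t) = (|x-z|^2+t)^{-1/(p-1)}$ is invariant under the parabolic scaling associated with the Fujita equation, so $\Lambda$ becomes independent of $T$; the iteration then closes globally, and monotone convergence yields a solution on $\R^N\times[0,\infty)$ under the time-independent smallness condition $c\leq c_*$.
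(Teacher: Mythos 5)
Your overall architecture matches the paper's: a monotone iteration starting from the free heat evolution (this is exactly Lemma~\ref{lem:super}, quoted from \cite{HS}), closed by a barrier of the form $AW+B$, with the Hardy weight handled by splitting $\R^N$ into $\{|y|\le|z|/2\}$ and its complement and using $|x|^{-\gamma}\lesssim|z|^{-\gamma}$ on the far region. For $p<p_F$ your weighted Gaussian is literally the paper's supersolution $\overline{w}=2^{N/2+1}c\psi(z)G(x-z,2t)+2C_0$, and for $p>p_F$ your explicit profile $(|x-z|^2+t)^{-1/(p-1)}$ is a reasonable alternative to the paper's choice $U=H^{-1}\bigl(e^{2t\Delta}H(f)\bigr)$ with $H(X)=X^{\alpha}$; the required smoothing inequality is standard Fujita technology and the $T$-independence for global existence comes out the same way.

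The genuine gap is the critical case $p=p_F$, in two respects. First, your case assignment is wrong for $p=p_F$ with $N\le2$: there the hypothesis on $u_0$ is still the log-corrected singularity $c\,\psi(z)|x-z|^{-N}(\log(e+|x-z|^{-1}))^{-N/2-1}\chi_1$ (only the weight $\psi(z)$ switches to the subcritical form), and a weighted Gaussian cannot serve as a barrier, since at any fixed $x\ne z$ one has $e^{t\Delta}u_0(x)\to u_0(x)>0$ while $G(x-z,2t)\to0$ as $t\to0$; the linear bound $e^{t\Delta}u_0\le AW$ already fails. Second, for $p=p_F$ with $N>2$ the logarithmic exponent in your barrier is wrong: the heat semigroup applied to $|x-z|^{-N}(\log(e+|x-z|^{-1}))^{-N/2-1}$ loses exactly one power of the logarithm (this is Lemma~\ref{lem:linxlog}; the resulting bound $|x-z|^{-N}(\log(e+|x-z|^{-1}))^{-N/2}$ is sharp at intermediate scales $|x-z|\gg t^{1/2}$), so a barrier carrying the exponent $-N/2-1$ cannot dominate $e^{t\Delta}u_0^{\mathrm{sing}}$; if you instead take exponent $-N/2$ to fix the linear part, the nonlinear Duhamel term must then be shown to return a full extra power of $\log$, and your proposal offers no mechanism for this bookkeeping. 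The paper resolves precisely this tension with the transform $U=H^{-1}\bigl(e^{2t\Delta}H(f)\bigr)$, $H(X)=X(\log(A+X))^{\beta}$: Jensen's inequality handles the linear part despite the log loss, the free parameter $\beta$ and the monotonicity conditions \eqref{eq:monoX} make $U^p/H(U)$ integrable in time up to the stated factor $(1+t^{1/2}+t^{1-\gamma/2})(\log(e+t^{-1/2}))^{-\beta}$, and the kernel factorization of Lemmas~\ref{lem:Gtra}--\ref{lem:Gmid} converts the Duhamel integral back into a multiple of $H(U)$. You correctly flag that ``logarithmic gains have to match exactly,'' but identifying the difficulty is not the same as resolving it, and the specific barrier you wrote down does not.
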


\begin{remark}
We can improve $\psi$ to $|z|^{\gamma/(p-1)}$ for each case 
if $T$ and $z\neq0$ satisfy
\begin{equation}\label{eq:Tzas}
	0<T\leq |z|^2. 
\end{equation} 
The improved statement is optimal up to $c_*$ 
compared with \eqref{eq:OSfjt}. 
Moreover, the assumption \eqref{eq:Tzas} is natural and is not so restrictive, 
since we can take $T$ arbitrarily small 
when we only need to consider local-in-time solvability. 
For more details, 
see Remarks {\rm \ref{rem:imprcri}} and {\rm \ref{rem:imprsubcri}} below. 
\end{remark}

\begin{remark}
In {\rm \cite{STW}}, it was shown that $L^{q_c}(\R^N)$ 
is the scaling invariant critical space 
for the well-posedness of \eqref{eq:main}, 
where $q_c:= N(p-1)/(2-\gamma)$ with $p\geq p_\gamma$. 
For $p>p_\gamma$, they also obtained a solution with initial data under 
$c |\cdot|^{-(2-\gamma)/(p-1)}\in L^{q_c,\infty}(\R^N)$, where 
$c>0$ is small and $L^{q_c,\infty}(\R^N)$ is the Lorentz space. 
Theorem {\rm \ref{th:main}} implies that, 
for $p\geq p_\gamma$, there exists $u_0$ such that 
$u_0$ does not belong to $L^{q_c,\infty}(\R^N)$ 
but \eqref{eq:main} possesses a solution. 
\end{remark}

We next consider the case where there is a possibility that 
the singularity of $u_0$ is located at the origin. 
In this case, the optimal singularity of $u_0$ 
is weaker than that of the Fujita equation.

\begin{theorem}\label{th:origin}
Fix $N\geq1$, $p>1$, $0<\gamma<\min\{2,N\}$, $0<T<\infty$ and 
$z\in \R^N$. 
Assume either $u_0$ is a nonnegative Radon measure satisfying 
\[
	u_0=c \delta_z + \phi \qquad \mbox{ if }p<p_\gamma 
\]
for a nonnegative function $\phi\in L^\infty(\R^N)$ 
with $\|\phi\|_{L^\infty(\R^N)}\leq C_0$, 
or $u_0$ is a nonnegative measurable function satisfying 
\[
	u_0(x)\leq \left\{ 
	\begin{aligned}
	& c |x-z|^{-N} 
	\left( \log ( e+|x-z|^{-1} ) \right)^{-\frac{N}{2-\gamma}-1}
	\chi_1(|x-z|) + C_0 
	&&\mbox{ if }p=p_\gamma, \\
	& c |x-z|^{-\frac{2-\gamma}{p-1}} + C_0 
	&&\mbox{ if }p>p_\gamma 
	\end{aligned}
	\right.
\]
for any $x\in \R^N\setminus\{z\}$. 
Here $c>0$ and $C_0\geq0$ are constants. 
Then there exist positive constants $c_*$ and $C_*$ 
depending on $N$, $p$ and $\gamma$ but not on $T$ and $z$ 
such that the following statements hold. 
If the constants $c$ and $C_0$ satisfy 
\[
	c\leq 
	\left\{
	\begin{aligned} 
	&c_* T^{-\frac{N(p_F-p)}{2(p-1)}} &&\mbox{ if }p<p_\gamma, \\
	&c_* (1+T^\frac{1}{2})^{-\frac{1}{p-1}} &&\mbox{ if }p=p_\gamma, \\
	&c_* 
	&&\mbox{ if }p>p_\gamma, 
	\end{aligned} 
	\right.
	\qquad 
	C_0 \leq C_* T^{-\frac{2-\gamma}{2(p-1)}}, 
\]
respectively, then \eqref{eq:main} possesses a solution on $\R^N\times[0,T)$. 
In addition, if $p>p_\gamma$, $c\leq c_*$ and $C_0=0$, 
then \eqref{eq:main} possesses a solution on $\R^N\times[0,\infty)$. 
\end{theorem}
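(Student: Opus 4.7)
The plan is to construct the claimed integral solution by a monotone iteration on the Duhamel equation
$$u(t)=S(t)u_0+\int_0^t S(t-s)\bigl[|\cdot|^{-\gamma}u(s)^p\bigr]\,ds,$$
where $S(t)=e^{t\Delta}$ denotes the heat semigroup on $\R^N$. Write $u_0=u_0^{(1)}+u_0^{(2)}$ where $u_0^{(1)}$ carries the singularity at $z$ (the Dirac mass $c\delta_z$ when $p<p_\gamma$, and the explicit power- or log-type profile of the statement when $p\ge p_\gamma$), while $u_0^{(2)}\le C_0$ is a bounded remainder. By positivity and linearity of $S(t)$, each piece can be estimated separately.

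The first task is an explicit pointwise bound on $S(t)u_0^{(1)}$. For the Dirac case this is $c$ times the Gauss kernel centered at $z$. For the measurable cases, direct convolution with the heat kernel yields a majorant $\Phi(x,t)$ of the self-similar Hardy type, namely $c(|x-z|^2+t)^{-(2-\gamma)/(2(p-1))}$ when $p>p_\gamma$, and the same profile corrected by an appropriate log-power when $p=p_\gamma$. In contrast to Theorem \ref{th:main}, no $\psi(z)$ prefactor is present here: when the singularity of $u_0$ may sit at the origin, the relevant self-similar scale is the one intrinsic to the Hardy equation, which already encodes the $|x|^{-\gamma}$ weight.

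The technical heart is then the weighted nonlinear Duhamel estimate
$$\int_0^t S(t-s)\bigl[|x|^{-\gamma}\Phi(\cdot,s)^p\bigr](x)\,ds\le K(c,T)\,\Phi(x,t),$$
with $K(c,T)$ made small by the smallness of $c$ and, in the sub- and critical regimes, of $T$. A parallel computation for the bounded piece gives a uniform bound controlled by a multiple of $C_0^pT^{(2-\gamma)(p-1)+\cdots}$, which is absorbable once $C_0\le C_*T^{-(2-\gamma)/(2(p-1))}$. Combining these produces a pointwise supersolution of the form $V=A\Phi+B$, and the iteration $u_1:=S(t)u_0$, $u_{k+1}:=S(t)u_0+\int_0^t S(t-s)[|\cdot|^{-\gamma}u_k^p]\,ds$ stays pointwise below $V$; its monotone limit is the desired integral solution on $[0,T)$. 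The global-in-time statement for $p>p_\gamma$ and $C_0=0$ follows from the scale invariance of $\Phi$, which makes $K(c,T)$ independent of $T$.

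The main obstacle is the weighted Duhamel estimate above. It requires decomposing the space-time integration according to $|x-z|$ vs.\ $\sqrt{t-s}$ and $|x|$ vs.\ $|x-z|$, and handling the Hardy weight $|x|^{-\gamma}$ via its local integrability, which is guaranteed by $\gamma<\min\{2,N\}$. The critical case $p=p_\gamma$ is the most delicate, because the specific logarithmic exponent $-N/(2-\gamma)-1$ is precisely the endpoint at which the nonlinear convolution converges; propagating exactly this power through every iterate requires careful tracking of borderline log-integrability. Once those convolution estimates are in hand, the rest of the argument is routine monotone-iteration bookkeeping.
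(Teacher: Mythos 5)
Your overall architecture (build a pointwise supersolution, then run the monotone iteration) is exactly the paper's Lemma \ref{lem:super}, and your treatment of the cases $p<p_\gamma$ (Dirac data, supersolution $\sim cG(x-z,t)+C_0$) and $p>p_\gamma$ (explicit profile $c(|x-z|^2+t)^{-(2-\gamma)/(2(p-1))}$, essentially the Ben Slimene--Tayachi--Weissler argument) is viable; the paper itself remarks that these two cases are not the novelty. Two smaller caveats there: uniformity of your constant $K$ in $z$ does not come for free and is best seen via the scaling $u\mapsto\lambda^{(2-\gamma)/(p-1)}u(\lambda\cdot,\lambda^2\cdot)$, which sends $z$ to $z/\lambda$ (your appeal to ``scale invariance of $\Phi$'' is literally correct only when $z=0$); and the statement requires $c_*$ independent of $z$, so this reduction should be made explicit.

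The genuine gap is the critical case $p=p_\gamma$, which is the main content of the theorem. There your ``weighted nonlinear Duhamel estimate'' is not merely delicate --- its natural implementation fails by a logarithm. If $\Phi$ is the heat evolution of $g(x)=|x-z|^{-N}(\log(e+|x-z|^{-1}))^{-N/(2-\gamma)-1}\chi_1$, then $\|\Phi(\cdot,s)\|_\infty^{p-1}\sim s^{-(2-\gamma)/2}\bigl(\log(e+s^{-1/2})\bigr)^{-1}$, and the standard closure
\[
\int_0^t \|\Phi(\cdot,s)\|_\infty^{p-1}\, S(t-s)\bigl[|\cdot|^{-\gamma}\Phi(\cdot,s)\bigr]\,ds
\;\lesssim\;
\Phi(x,t)\int_0^{t} s^{-1}\bigl(\log(e+s^{-1/2})\bigr)^{-1}\,ds
\]
produces a divergent time integral: the exponent $-N/(2-\gamma)-1$ is exactly the borderline, so no smallness in $c$ or $T$ rescues it. This is precisely why the paper does not iterate on $S(t)u_0$ itself but on the Robinson--Sier\.{z}\polhk ega-type supersolution $V=H^{-1}\bigl(\int G(\cdot-y,t)H(g(y))\,dy\bigr)$ with the convex function $H(X)=X(\log(A+X))^{\beta}$, $0<\beta<N/2$: the ratio $V^p/H(V)$ then carries the extra factor $(\log)^{-1-\beta}$ that makes $\int_0^t s^{-1}(\log(e+s^{-1/2}))^{-1-\beta}\,ds$ converge, and the loss $(\log(e+t^{-1/2}))^{\beta}$ is recovered from $H(V)/V$ via \eqref{eq:GHflin}. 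Your proposal names this step as ``the technical heart'' but supplies no mechanism --- neither the $H$-device nor a genuinely pointwise decomposition exploiting where the mass of the kernel sits relative to the singularity --- so as written the critical case does not close.
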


\begin{remark}
Theorem {\rm \ref{th:origin}} with $z=0$ is optimal up to $c_*$ 
compared with \eqref{eq:OShd}. 
\end{remark}

\begin{remark}
In the case of $u_0=\delta_z$ with $z\in \R^N$. 
Theorems {\rm \ref{th:main}} and {\rm \ref{th:origin}} together with 
{\rm \cite{HS} }imply that \eqref{eq:main} possesses 
a local-in-time solution if and only if either 
$1<p< p_\gamma$, or $p_\gamma\leq p<p_F$ and $z\neq 0$.
\end{remark}

\begin{remark}
The main novelty of Theorem {\rm \ref{th:origin}} 
is the critical case of  $p= p_\gamma$, since 
the case of $p<p_\gamma$ is not difficult and 
the case of  $p>p_\gamma$ follows from {\rm \cite{STW}}. 
However, we give a short proof for $p< p_\gamma$
and a unified proof for $p\geq p_\gamma$. 
\end{remark}

The proofs of our main results are based on showing the existence of 
supersolutions for the corresponding integral equation to \eqref{eq:main}.
Each of the supersolutions is constructed in the same spirit of 
Robinson and Sier\.{z}\polhk ega \cite{RS13}, and has the form
\[
	u^+(x,t):= H^{-1}\left(\int_{\RN} G(x-y, mt) H(u_0(y)) dy\right), 
\]
where $m\in \N$ and 
$H:[0,\infty)\rightarrow [0,\infty)$ is a strictly 
increasing and convex function. 
However, the way to verify that $u^+$ is indeed a supersolution 
is totally different from \cite{RS13}. 
They used uniform estimates with respect to $x$, but 
we estimate $u^+$ uniformly with respect to $t$ 
and analyze its singularity in detail.

The rest of this paper is organized as follows.
In Section \ref{sec:preliminaries}, we give the definition of solutions 
and collect estimates on the heat kernel. 
In Sections \ref{sec:rmain} and \ref{sec:rrrmain}, 
we prove Theorem \ref{th:main} for $p\geq p_F$ and $p<p_F$, respectively.
In Sections \ref{sec:1.2sup} and \ref{sec:1.2sub}, 
we prove Theorem \ref{th:origin} for $p\geq p_\gamma$ and $p<p_\gamma$, respectively.
In Section \ref{sec:frac}, 
we state results for the fractional Hardy parabolic equation.

\section{Preliminaries}\label{sec:preliminaries}
In Subsection \ref{subsec:integ}, we introduce the definition of solutions 
in this paper and quote a lemma on the existence of solutions. 
In Subsection \ref{subsec:heat}, we give estimates concerning the heat kernel.

\subsection{Integral equation}\label{subsec:integ}
Let $u_0$ be a nonnegative measurable function on $\RN$ or 
a nonnegative Radon measure on $\R^N$. 
In what follows, we always consider nonnegative solutions 
of the following integral equation corresponding to \eqref{eq:main}. 
\[
\begin{aligned}
	&u(x,t) = \Phi[u](x,t), \\
	&\Phi[u](x,t) := \int_{\RN} G(x-y,t) du_0(y) 
	+ \int_0^t \int_{\RN} G(x-y, t-s) |y|^{-\gamma} u(y,s)^p dyds, 
\end{aligned}
\]
where $G(x,t):=(4\pi t)^{-N/2} e^{-|x|^2/(4t)}$ 
is the Gaussian heat kernel in $\RN$. 
Remark that, in the case where $u_0$ is a measurable function, 
we regard $du_0(y)$ as $u_0 dy$. 
Solutions and supersolutions are defined as follows. 

\begin{definition}\label{def:sol}
Let $0<T\leq \infty$. 
A nonnegative measurable function $u$ on $\RN\times(0,T)$ 
is called a solution of \eqref{eq:main} on $\RN\times[0,T)$ if 
$u$ satisfies $0\leq u<\infty$ and $u=\Phi[u]$ a.e. in $\RN\times(0,T)$. 
In addition, a nonnegative measurable function $\overline{u}$ on $\RN\times(0,T)$ 
is called a supersolution of \eqref{eq:main} on $\RN\times[0,T)$ if 
$\overline{u}$ satisfies $0\leq \overline{u}<\infty$ 
and $\overline{u}\geq \Phi[\overline{u}]$ a.e. in $\RN\times(0,T)$. 
\end{definition}

The method to construct solutions of \eqref{eq:main} 
is based on the following lemma. 

\begin{lemma}\label{lem:super}
Let $0<T\leq\infty$. 
Assume that there exists a supersolution $\overline{u}$ of 
\eqref{eq:main} on $\RN\times[0,T)$. 
Then there exists a solution on \eqref{eq:main} on $\RN\times[0,T)$. 
\end{lemma}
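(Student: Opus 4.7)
The plan is to construct the solution by monotone iteration, using the supersolution $\overline{u}$ as an a priori upper barrier. Define a sequence $\{u_n\}_{n\geq 0}$ of nonnegative measurable functions on $\R^N \times (0,T)$ by
\[
	u_0(x,t) := 0, \qquad u_{n+1}(x,t) := \Phi[u_n](x,t) \quad (n\geq 0).
\]
The key observation is that $\Phi$ is monotone: if $0 \leq v \leq w$ (a.e.), then $\Phi[v] \leq \Phi[w]$ (a.e.), because the heat kernel $G$ is nonnegative, the weight $|y|^{-\gamma}$ is nonnegative, and $u \mapsto u^p$ is increasing on $[0,\infty)$.

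From the monotonicity of $\Phi$ and the definition, I would first prove two things by induction on $n$: (i) $u_n \leq u_{n+1}$ a.e.\ in $\R^N \times (0,T)$, and (ii) $u_n \leq \overline{u}$ a.e.\ in $\R^N \times (0,T)$. Claim (i) follows because $u_0 = 0 \leq \Phi[0] = u_1$, and inductively $u_n \leq u_{n+1}$ gives $u_{n+1} = \Phi[u_n] \leq \Phi[u_{n+1}] = u_{n+2}$. Claim (ii) follows because $u_0 = 0 \leq \overline{u}$, and inductively $u_n \leq \overline{u}$ gives $u_{n+1} = \Phi[u_n] \leq \Phi[\overline{u}] \leq \overline{u}$, where the last inequality is the supersolution property.

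Combining (i) and (ii), the sequence $\{u_n\}$ is nondecreasing and bounded above a.e.\ by the finite function $\overline{u}$, so it has a pointwise a.e.\ limit $u$ with $0 \leq u \leq \overline{u} < \infty$ a.e. To pass to the limit in the relation $u_{n+1} = \Phi[u_n]$, I would apply the monotone convergence theorem separately to the linear term $\int_{\R^N} G(x-y,t)\,du_0(y)$ (which is independent of $n$) and to the nonlinear term $\int_0^t \int_{\R^N} G(x-y,t-s)|y|^{-\gamma} u_n(y,s)^p\,dy\,ds$, using that $u_n^p \uparrow u^p$ pointwise and the integrand is nonnegative. This yields $u = \Phi[u]$ a.e., so $u$ is a solution on $\R^N \times [0,T)$ in the sense of Definition \ref{def:sol}.

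I do not anticipate a real obstacle here, since each ingredient is standard; the only point requiring a little care is making sure the monotone convergence argument for the nonlinear term is legitimate, which it is because all functions involved are nonnegative and measurable, and the upper bound $u \leq \overline{u}$ ensures $u$ is finite a.e.\ so that $u^p$ is well defined. No integrability of $\overline{u}^p$ is actually needed to justify the limit passage; only measurability and nonnegativity are used in monotone convergence.
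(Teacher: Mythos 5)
Your proof is correct and is exactly the standard monotone-iteration argument that the paper delegates to the cited reference (Lemma 2.2 of \cite{HS}; see also \cite{HI18,RS13}): iterate $\Phi$ from zero, use monotonicity of $\Phi$ and the supersolution as an upper barrier, and pass to the limit by monotone convergence. The only cosmetic issue is that you reuse the symbol $u_0$ both for the initial data appearing inside $\Phi$ and for the zeroth iterate; rename the iterates (e.g.\ $v_n$) to avoid the clash.
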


\begin{proof}
This lemma was proved by \cite[Lemma 2.2]{HS},  see also \cite{HI18,RS13}. 
\end{proof}

\subsection{Estimates on the heat kernel}\label{subsec:heat}

We first give an equality and an estimate for the heat kernel, 
and then we list estimates on some integrals.

\begin{lemma}\label{lem:Gtra}
For any $x, y, \eta\in \R^N$ and $0<s<t$, 
\[
	G(x-y,t-s) G(y-\eta,s)
	=
	G(x-\eta,t) 
	G\left( y-\frac{s}{t}x - \frac{t-s}{t}\eta, 
	\frac{s(t-s)}{t}\right). 
\]
\end{lemma}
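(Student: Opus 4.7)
The plan is to prove the identity by direct computation: both sides are products of explicit Gaussian densities, so it suffices to check that the prefactors agree and that the quadratic forms in the exponents agree.

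First I would compute the constant prefactors. On the left we have $(4\pi(t-s))^{-N/2}(4\pi s)^{-N/2}=(4\pi)^{-N}(s(t-s))^{-N/2}$, and on the right $(4\pi t)^{-N/2}(4\pi s(t-s)/t)^{-N/2}=(4\pi)^{-N}(s(t-s))^{-N/2}$, which match at once.

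Next I would match the exponents. Abbreviating $\sigma:=s(t-s)/t$ and $\xi:=\frac{s}{t}x+\frac{t-s}{t}\eta$, the claim reduces to the scalar identity
\[
\frac{|x-y|^2}{t-s}+\frac{|y-\eta|^2}{s}=\frac{|x-\eta|^2}{t}+\frac{|y-\xi|^2}{\sigma},
\]
i.e.\ to showing that the quadratic form in $y$ on the left has its minimum at $y=\xi$ with minimum value $|x-\eta|^2/t$, and that the Hessian in $y$ (scalar coefficient $\frac{1}{t-s}+\frac{1}{s}$) equals $1/\sigma=t/(s(t-s))$. I would first verify this Hessian identity (immediate from $\frac{1}{t-s}+\frac{1}{s}=\frac{t}{s(t-s)}$), then find the minimizer in $y$ by setting the $y$-gradient of the left side to zero (this yields exactly $y=\xi$), and finally plug $y=\xi$ into the left side and simplify to $|x-\eta|^2/t$.

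The only step requiring any care is the last evaluation at the minimizer, which is a short algebraic computation that reduces to the identity $\frac{s}{t(t-s)}+\frac{1}{t}=\frac{1}{t-s}$ (coefficient of $|x|^2$) and its symmetric counterpart for $|\eta|^2$, together with cancellation of the $x\cdot\eta$ cross term. Since the whole argument is coordinate-free (everything is expressed through inner products), no obstacle arises from working in $\R^N$ rather than on the line, and the lemma follows.
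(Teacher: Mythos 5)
Your proof is correct and matches the paper's approach: the paper simply states that the identity ``can be checked by the straightforward computations,'' and your direct verification (matching the prefactors and completing the square in $y$ to match the exponents) is exactly that computation carried out in full.
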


\begin{proof}
This can be checked by the straightforward computations. 
\end{proof}

\begin{lemma}\label{lem:Gmid}
There exists a constant $C>0$ depending only on $N$ 
such that, for any $x,y,\eta\in \R^N$ and $0<s<t$, 
\[
\begin{aligned}
	G\left( y-\frac{s}{t}x-\frac{t-s}{t}\eta, 2\frac{s(t-s)}{t} \right) 
	e^{-\frac{|x-y|^2}{8(t-s)}} 
	\leq C  ( G( y-\eta, 35 s) + G( y-x, 35 (t-s) ) ). 
\end{aligned}
\]
\end{lemma}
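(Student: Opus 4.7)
The plan is to analyze the exponent and prefactor on the left-hand side separately and then perform a case split on whether $s\le t/2$ or $s\ge t/2$. First, I would observe that the midpoint can be rewritten as $y-(s/t)x-((t-s)/t)\eta = (s/t)(y-x)+((t-s)/t)(y-\eta)$, so setting $a=s/t$ the left-hand side becomes
\[
	(8\pi a(1-a)t)^{-N/2}\exp(-E),\qquad
	E:=\frac{|a(y-x)+(1-a)(y-\eta)|^2}{8a(1-a)t}+\frac{|y-x|^2}{8(1-a)t}.
\]

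The heart of the argument is extracting two pointwise lower bounds on $E$. Trivially, since the first term is nonnegative, $E\ge|y-x|^2/(8(1-a)t)$. Completing the square in $(y-x)$ with $(y-\eta)$ held fixed (or equivalently expanding $|a(y-x)+(1-a)(y-\eta)|^2$ and minimizing the resulting quadratic in $y-x$) yields the second bound $E\ge(1-a)|y-\eta|^2/(8a(a+1)t)$, the minimum being realized at $y-x=-(1-a)(y-\eta)/(a+1)$.

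With these in hand the argument splits. If $a\le 1/2$, then $(1-a)/(a+1)\ge 1/3$ and $a(1-a)\ge a/2$, so the second bound gives $E\ge|y-\eta|^2/(24at)$ while $(8\pi a(1-a)t)^{-N/2}\le(4\pi at)^{-N/2}$; since $1/24>1/140$, the leftover exponential is bounded by $1$ and the left-hand side is bounded by $35^{N/2}G(y-\eta,35at)$. If $a\ge 1/2$, then analogously $a(1-a)\ge(1-a)/2$, and the trivial bound together with $1/8>1/140$ yields the left-hand side is bounded by $35^{N/2}G(y-x,35(1-a)t)$. Combining the two cases yields the stated inequality with a constant $C$ depending only on $N$.

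The main obstacle is the indefinite cross term in the expansion of $|a(y-x)+(1-a)(y-\eta)|^2$: it has to be absorbed cleanly, and the completing-the-square lower bound is the cleanest way to do so while keeping track of the prefactor loss $(a(1-a))^{-N/2}\le 2^{N/2}\max(a,1-a)^{-N/2}$. The specific constant $35=140/4$ appearing in the lemma is exactly what is required so that both exponential rates $1/24$ and $1/8$, coming from the two subcases, exceed the target rate $1/140$ of the Gaussian heat kernels on the right.
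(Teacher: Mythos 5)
Your proof is correct, and it reaches the estimate by a genuinely different mechanism than the paper, even though the outer structure (case split at $s=t/2$, then matching exponent and prefactor separately) is the same. The paper bounds the two exponents separately using the Peter--Paul inequality $|a+b|^2\ge\delta|a|^2-\delta(1-\delta)^{-1}|b|^2$, which forces it to introduce the auxiliary quantity $|x-\eta|^2$ in both exponents and arrange an exact cancellation between the resulting terms; the specific constants $1/5$, $1/4$, $1/8$, $1/7$ are tuned so that the $|x-\eta|^2$ contributions cancel and the leftover $|y-\eta|^2$ rate is $1/40-1/56=1/140$. You instead combine both exponents into a single quadratic form $E$ in $u=y-x$ and $v=y-\eta$ and minimize it exactly over $u$ (your completed-square computation is right: the minimum of $(a^2+a)|u|^2+2a(1-a)u\cdot v+(1-a)^2|v|^2$ is $\frac{(1-a)^2}{a+1}|v|^2$, giving $E\ge\frac{(1-a)|v|^2}{8a(a+1)t}$), which avoids $|x-\eta|$ entirely, yields the stronger rates $1/24$ and $1/8$, and makes transparent that $35$ is far from sharp --- your bounds would support, say, $G(y-\eta,6s)+G(y-x,6(t-s))$. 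One small slip in your closing remark: the prefactor inequality should read $(a(1-a))^{-N/2}\le 2^{N/2}\bigl(\min(a,1-a)\bigr)^{-N/2}$, not $\max$; your case-by-case argument ($a(1-a)\ge a/2$ for $a\le 1/2$ and $a(1-a)\ge(1-a)/2$ for $a\ge 1/2$) already uses the correct direction, so this does not affect the proof.
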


\begin{proof}
In this proof, we denote by $\tilde G$ the left hand side of the desired inequality. 
Let us first consider the case of $0<s<t/2$. 
By using $|a+b|^2 \geq \delta |a|^2 -\delta(1-\delta)^{-1}|b|^2$ 
for $a,b\in\R^N$ and $0<\delta<1$, we have 
\[
\begin{aligned}
	&\left| y-\frac{s}{t}x-\frac{t-s}{t}\eta \right|^2 
	= 
	\left| y-\eta-\frac{s}{t}(x-\eta) \right|^2 
	\geq 
	\frac{1}{5} |y-\eta|^2 - \frac{1}{4} \left( \frac{s}{t} \right)^2 |x-\eta|^2, \\
	& |x-y|^2 \geq \frac{1}{8}|x-\eta|^2 - \frac{1}{7} |y-\eta|^2, 
\end{aligned}
\]
and so
\[
\begin{aligned}
	\tilde G &\leq 
	C \left( 2 \frac{s(t-s)}{t} \right)^{-\frac{N}{2}} 
	\exp\left( -\frac{5^{-1} |y-\eta|^2}{8 s(t-s) t^{-1} } \right) 
	\exp\left( \frac{ 4^{-1} s^2 t^{-2} |x-\eta|^2}{8s(t-s)t^{-1}} \right) \\
	&\quad 
	\times 
	\exp\left( -\frac{|x-\eta|^2}{64(t-s)} \right)
	\exp\left( \frac{|y-\eta|^2}{56(t-s)} \right)  \\
	&\leq 
	C  s^{-\frac{N}{2}} 
	\exp\left( -\frac{|y-\eta|^2}{40 s} \right) 
	\exp\left( \frac{|x-\eta|^2}{64(t-s)} \right) 
	\exp\left( -\frac{|x-\eta|^2}{64(t-s)}  \right)
	\exp\left( \frac{|y-\eta|^2}{56 s }  \right)   \\
	&= 
	C s^{-\frac{N}{2}} 
	\exp\left( -\frac{ |y-\eta|^2}{140 s } \right)
	= CG(y-\eta,35s)
\end{aligned} 
\]
for any $0<s<t/2$, where $C$ is a constant depending only on $N$. 

We next consider the case of $t/2<s<t$. By 
\[
\begin{aligned}
	\left| y-\frac{s}{t}x-\frac{t-s}{t}\eta \right|^2 
	= 
	\left| y-x+ \frac{t-s}{t} (x-\eta) \right|^2 
	\geq 
	\frac{1}{5} \left( \frac{t-s}{t} \right)^2 |x-\eta|^2 
	- \frac{1}{4} |x-y|^2, 
\end{aligned}
\]
we have 
\[
\begin{aligned}
	\tilde G &\leq 
	C \left( 2 \frac{s(t-s)}{t} \right)^{-\frac{N}{2}} 
	\exp\left( -\frac{5^{-1} (t-s)^2 t^{-2} |x-\eta|^2}
	{8 s(t-s) t^{-1} } \right) 
	\exp\left( \frac{|x-y|^2}{32s(t-s) t^{-1} } \right)
	\exp\left( - \frac{|x-y|^2}{8(t-s)} \right) \\
	&\leq 
	C   (t-s)^{-\frac{N}{2}} 
	\exp\left( \frac{|x-y|^2}{16(t-s)} \right)
	\exp\left( - \frac{|x-y|^2}{8(t-s)} \right) \\
	&= 
	C (t-s)^{-\frac{N}{2}} 
	\exp\left( - \frac{|y-x|^2}{16 (t-s)} \right) 
	\leq  CG(y-x,35(t-s))
\end{aligned} 
\]
for any $t/2<s<t$ with a constant $C$ depending only on $N$. 
Hence the lemma follows. 
\end{proof}

\begin{lemma}\label{lem:linxpoly}
Let $0<k<N$. 
Then there exists $C>0$ depending only on $N$ and $k$ such that, 
for any $x\in\R^N$ and $t>0$, 
\[
	\int_{\R^N} G(x-y,t) |y|^{-k} dy 
	\leq C |x|^{-k}. 
\]
\end{lemma}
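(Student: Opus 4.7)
The plan is to prove the estimate by a direct domain-decomposition argument, splitting the integration variable into three regions based on the size of $|y|$ relative to $|x|$: the inner ball $A_1 = \{|y| \leq |x|/2\}$, the middle annulus $A_2 = \{|x|/2 < |y| \leq 2|x|\}$, and the outer complement $A_3 = \{|y| > 2|x|\}$. On each region a different mechanism will be used: spatial decay of the Gaussian on $A_1$, pointwise equivalence $|y|^{-k}$ and $|x|^{-k}$ on $A_2$, and Gaussian tail combined with the integrability of $|y|^{-k}$ near the origin on $A_3$.

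The contribution from $A_2$ is immediate: the bound $|y|^{-k} \leq 2^k |x|^{-k}$ combined with $\int_{\R^N} G(x-y,t)\, dy = 1$ gives $2^k |x|^{-k}$. On $A_1$, the inequality $|x-y| \geq |x|/2$ together with the boundedness of $s \mapsto s^{N/2} e^{-s/16}$ on $[0,\infty)$ yields
\[
G(x-y,t) \leq (4\pi t)^{-N/2} e^{-|x|^2/(16t)} \leq C |x|^{-N},
\]
and pairing this with $\int_{A_1} |y|^{-k}\, dy = C |x|^{N-k}$ (finite since $k < N$) gives a contribution bounded by $C |x|^{-k}$.

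The main work is the outer region. On $A_3$ the inequality $|y| \geq 2|x|$ implies $|x-y| \geq |y|/2$, hence $G(x-y,t) \leq (4\pi t)^{-N/2} e^{-|y|^2/(16t)}$. After the change of variables $y = \sqrt{t}\, w$, setting $a := 2|x|/\sqrt{t}$, the task reduces to verifying
\[
\int_{|w|>a} e^{-|w|^2/16} |w|^{-k}\, dw \leq C a^{-k}.
\]
I would treat this in two subcases: for $a \geq 1$, pull out $|w|^{-k} \leq a^{-k}$ and use that $e^{-|w|^2/16}$ is Lebesgue integrable on $\R^N$; for $a \leq 1$, bound the integral by the absolute constant $\int_{\R^N} e^{-|w|^2/16} |w|^{-k}\, dw$ (again finite thanks to $k < N$) and observe that $1 \leq a^{-k}$ in this regime.

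The only genuinely delicate step is the outer estimate: because $|y|^{-k}$ fails to be integrable at infinity, any bound there must exploit Gaussian decay non-trivially, and the split $a \geq 1$ versus $a \leq 1$ corresponds to the two natural regimes $t \lesssim |x|^2$ and $t \gtrsim |x|^2$. By contrast, the inner and middle contributions are soft estimates relying only on the Gaussian normalization and the assumption $k < N$.
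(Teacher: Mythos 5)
Your proof is correct, but it is more elaborate than the paper's in the outer region, and your closing justification for that extra work is mistaken. The paper splits only into $\Omega_1=\{|y|\le |x|/2\}$ and $\Omega_2=\{|y|\ge |x|/2\}$: on $\Omega_1$ it argues exactly as you do (Gaussian decay $|x-y|\ge|x|/2$ plus $\int_{\Omega_1}|y|^{-k}\,dy\le C|x|^{N-k}$), and on all of $\Omega_2$ it simply uses the pointwise bound $|y|^{-k}\le 2^k|x|^{-k}$ together with $\int_{\R^N}G(x-y,t)\,dy=1$. Your further split of $\Omega_2$ into the annulus $A_2$ and the far region $A_3$, with the rescaled tail estimate $\int_{|w|>a}e^{-|w|^2/16}|w|^{-k}\,dw\le Ca^{-k}$, is a valid computation and does close the argument. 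However, your remark that on $A_3$ ``any bound must exploit Gaussian decay non-trivially'' because $|y|^{-k}$ is not integrable at infinity is not right: on $A_3$ one still has $|y|^{-k}\le 2^{-k}|x|^{-k}$, and the non-integrability of $|y|^{-k}$ alone is irrelevant since it is paired with the Gaussian, whose total mass is $1$. So the same soft estimate you used on $A_2$ already disposes of $A_3$, which is precisely why the paper needs only two regions. Both routes give the lemma with a constant depending only on $N$ and $k$; the paper's is shorter.
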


\begin{proof}
Set $\Omega_1:=\{y\in\R^N; |y|\leq |x|/2\}$ and 
$\Omega_2:=\{y\in\R^N; |y|\geq |x|/2\}$. 
For $y\in \Omega_1$, we have $|x-y|\geq |x|-|y|\geq |x|/2$. Thus, 
\[
	\int_{\Omega_1} G(x-y,t) |y|^{-k} dy 
	\leq C t^{-\frac{N}{2}} e^{-\frac{|x|^2}{16t}} \int_{\Omega_1} |y|^{-k} dy 
	\leq C t^{-\frac{N}{2}} e^{-\frac{|x|^2}{16t}} |x|^{N-k} 
	\leq C|x|^{-k}, 
\]
since $\sup_{t>0} t^{-N/2} e^{-|x|^2/(16t)}\leq C |x|^{-N}$. 
On the other hand, from $\int_{\R^N} e^{-|x-y|^2/(4t)} dy = C t^{N/2}$, 
it follows that 
\[
	\int_{\Omega_2} G(x-y,t) |y|^{-k} dy 
	\leq C t^{-\frac{N}{2}} |x|^{-k} \int_{\Omega_2} e^{-\frac{|x-y|^2}{4t}} dy 
	\leq C t^{-\frac{N}{2}} |x|^{-k} \int_{\R^N} e^{-\frac{|x-y|^2}{4t}} dy
	= C|x|^{-k}. 
\]
Hence the lemma follows. 
\end{proof}

\begin{lemma}\label{lem:linxlog}
Let $k>0$. 
Then there exists $C>0$ depending only on $N$ and $k$ such that 
\[
	\int_{\R^N} G(x-y,t) |y|^{-N} 
	\left( \log(e+|y|^{-1}) \right)^{-k-1} \chi_1(|y|) dy 
	\leq C |x|^{-N} \left( \log(e+|x|^{-1}) \right)^{-k}
\]
for any $x\in\R^N$ and $t>0$. 
\end{lemma}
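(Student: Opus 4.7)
The plan is to adapt the proof of Lemma \ref{lem:linxpoly} by splitting the integration region into a near‑diagonal part and a far part, but now tracking the logarithmic weight carefully. Fix $x\neq 0$ and $t>0$, and write $\Omega_1:=\{y\in\R^N:|y|\leq |x|/2\}$ and $\Omega_2:=\{y\in\R^N:|y|\geq |x|/2\}$, so the integral equals $I_1+I_2$. On $\Omega_1$ we have $|x-y|\geq|x|/2$, hence $G(x-y,t)\leq Ct^{-N/2}e^{-|x|^2/(16t)}\leq C|x|^{-N}$, which pulls out of the integral just as in Lemma \ref{lem:linxpoly}. On $\Omega_2$ we use the full mass bound $\int_{\R^N}G(x-y,t)\,dy=1$ and instead estimate the weight $|y|^{-N}(\log(e+|y|^{-1}))^{-k-1}$ pointwise.

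For $I_1$ the task is to control
\[
\int_{\Omega_1\cap\{|y|\leq 1\}} |y|^{-N}\bigl(\log(e+|y|^{-1})\bigr)^{-k-1}\,dy
=\omega_{N-1}\int_0^{\min\{|x|/2,\,1\}}\! r^{-1}\bigl(\log(e+r^{-1})\bigr)^{-k-1}\,dr.
\]
The key computation, which is the one calculus step worth doing by hand, is
\[
\frac{d}{dr}\bigl(\log(e+r^{-1})\bigr)^{-k}
=\frac{k}{r(er+1)}\bigl(\log(e+r^{-1})\bigr)^{-k-1},
\]
so for $r_0\leq 1$ the bound $1/r\leq (e+1)/(r(er+1))$ gives
\[
\int_0^{r_0} r^{-1}\bigl(\log(e+r^{-1})\bigr)^{-k-1}\,dr
\leq \frac{e+1}{k}\bigl(\log(e+r_0^{-1})\bigr)^{-k}.
\]
Plugging $r_0=\min\{|x|/2,1\}$ and using $\log(e+2|x|^{-1})\geq \log(e+|x|^{-1})$ yields
$I_1\leq C|x|^{-N}(\log(e+|x|^{-1}))^{-k}$ in the regime $|x|\leq 2$, and $I_1\leq C|x|^{-N}$ in the regime $|x|\geq 2$, which is absorbed into the desired bound because $\log(e+|x|^{-1})$ is then bounded from above and below by universal constants.

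For $I_2$ the point is that on $\Omega_2\cap\{|y|\leq 1\}$ the weight is largest at $|y|=|x|/2$: since $r\mapsto r^{-N}(\log(e+r^{-1}))^{-k-1}$ is decreasing,
\[
|y|^{-N}\bigl(\log(e+|y|^{-1})\bigr)^{-k-1}\leq 2^N|x|^{-N}\bigl(\log(e+2|x|^{-1})\bigr)^{-k-1}
\leq C|x|^{-N}\bigl(\log(e+|x|^{-1})\bigr)^{-k},
\]
where in the last step I use $\log(e+|x|^{-1})\geq \log e=1$ to absorb one extra power of the log. Integrating $G$ over $\Omega_2$ against $1$ gives at most $1$, so $I_2\leq C|x|^{-N}(\log(e+|x|^{-1}))^{-k}$, and adding $I_1$ and $I_2$ finishes the proof. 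The only real obstacle is the antiderivative identity above; everything else is a direct copy of the splitting strategy from Lemma \ref{lem:linxpoly}, with the observation that because the radial weight is monotone, one saves exactly the single log power that matches the right‑hand side.
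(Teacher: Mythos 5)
Your decomposition into $\Omega_1=\{|y|\le|x|/2\}$ and $\Omega_2=\{|y|\ge|x|/2\}$, and your treatment of $I_1$ via the antiderivative identity for $(\log(e+r^{-1}))^{-k}$, are exactly the paper's argument, and that part is correct. The gap is in $I_2$: you assert that $r\mapsto r^{-N}\left(\log(e+r^{-1})\right)^{-k-1}$ is decreasing, and this is false in general. Differentiating gives
\[
\frac{d}{dr}\Bigl(r^{-N}\bigl(\log(e+r^{-1})\bigr)^{-k-1}\Bigr)
= r^{-N-1}\bigl(\log(e+r^{-1})\bigr)^{-k-2}\left[-N\log(e+r^{-1})+\frac{k+1}{er+1}\right],
\]
and the bracket can be positive: for $N=1$, $k=4$, $r=1$ it equals $-\log(e+1)+5/(e+1)\approx 0.03>0$, so the function is increasing near $r=1$ (numerically, its value at $r=0.9$ is smaller than at $r=1$). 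Since the lemma is stated for arbitrary $k>0$, you cannot invoke this monotonicity as stated.

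The repair is the one the paper uses: first replace $e$ by a constant $K=K(N,k)$ so large that $X\mapsto X^{-N}(\log(K+X^{-1}))^{-k-1}$ \emph{is} decreasing (large $K$ makes the bracket above negative, since $-N\log K+(k+1)<0$), using that $\log(e+s)$ and $\log(K+s)$ are comparable up to constants depending only on $K$. After this substitution your evaluation of the weight at $|y|=|x|/2$ and the absorption of one log power via $\log(e+|x|^{-1})\ge 1$ go through verbatim. With that single correction the proof is complete and coincides with the paper's.
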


\begin{proof}
Set $\Omega_1$ and $\Omega_2$ as in Lemma \ref{lem:linxpoly}. 
Using $|x-y|\geq |x|/2$ for $y\in \Omega_1$ gives 
\[
\begin{aligned}
	&\int_{\Omega_1} G(x-y,t) |y|^{-N} 
	\left( \log(e+|y|^{-1}) \right)^{-k-1} \chi_1(|y|) dy \\
	&\leq C t^{-\frac{N}{2}} e^{-\frac{|x|^2}{16t}} 
	\int_0^{\min\{ |x|/2, 1\}}
	(er+1)\frac{r^{-1}}{er+1} \left( \log(e+r^{-1}) \right)^{-k-1} dy \\
	&\leq C |x|^{-N} 
	\int_0^{\min\{ |x|/2, 1\}}
	\frac{r^{-1}}{er+1} \left( \log(e+r^{-1}) \right)^{-k-1} dy \\
	&\leq C |x|^{-N} \left( \log(e+2|x|^{-1}) \right)^{-k} 
	\leq C |x|^{-N} \left( \log(e+|x|^{-1}) \right)^{-k}. 
\end{aligned}
\]
We fix a constant $K>0$ so large that 
$X\mapsto X^{-N} (\log(K+X^{-1}))^{-k-1}$ is decreasing. 
Then, 
\[
\begin{aligned}
	&\int_{\Omega_2} G(x-y,t) |y|^{-N} 
	\left( \log(e+|y|^{-1}) \right)^{-k-1} dy \\
	&\leq 
	C \int_{\Omega_2} G(x-y,t) |y|^{-N} 
	\left( \log(K+|y|^{-1}) \right)^{-k-1} dy  \\
	&\leq 
	C t^{-\frac{N}{2}}  |x|^{-N} \left( \log(K+2|x|^{-1}) \right)^{-k-1}
	\int_{\Omega_2} e^{-\frac{|x-y|^2}{4t}}  dy \\
	&\leq 
	C  |x|^{-N} \left( \log(K+2|x|^{-1}) \right)^{-k-1} 
	\leq 
	C  |x|^{-N} \left( \log(e+|x|^{-1}) \right)^{-k}, 
\end{aligned}
\]
and the lemma is proved. 
\end{proof}

\begin{lemma}\label{lem:lint}
Let $\varphi\in L^1_\loc(\R^N)$ be a nonnegative function. 
Then there exists $C>0$ depending only on $N$ such that, 
for any $x\in\R^N$ and $t>0$, 
\[
	\int_{\R^N} G(x-y,t) \varphi(y) dy \leq 
	C t^{-\frac{N}{2}} 
	\sup_{\zeta\in\R^N} \int_{B(\zeta,t^\frac{1}{2})} \varphi(y) dy. 
\]
\end{lemma}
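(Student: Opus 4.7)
The plan is to cover $\R^N$ by balls of radius $t^{1/2}$ with bounded overlap and exploit the fact that the Gaussian kernel is approximately constant on each such ball, decaying exponentially with the squared distance from $x$ (measured in units of $t^{1/2}$).

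More concretely, first I would consider the lattice points $\{\zeta_j := t^{1/2} j : j \in \mathbb{Z}^N\}$ and use the fact that the balls $B(\zeta_j, t^{1/2})$ cover $\R^N$ with overlap bounded by a constant depending only on $N$. I would write
\[
\int_{\R^N} G(x-y,t) \varphi(y)\, dy
\leq \sum_{j\in \mathbb{Z}^N} \int_{B(\zeta_j, t^{1/2})} G(x-y,t)\varphi(y)\, dy.
\]
On each ball $B(\zeta_j, t^{1/2})$ I would bound $G(x-y,t)$ from above by a multiple of $t^{-N/2} \exp(-c|x-\zeta_j|^2/t)$, valid for all $y$ in the ball with constants depending only on $N$ (this follows because $|x-y|^2 \geq \tfrac12 |x-\zeta_j|^2 - |y-\zeta_j|^2$ and $|y-\zeta_j|\leq t^{1/2}$). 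Pulling this factor out of each integral and estimating each remaining integral by the supremum on the right-hand side yields
\[
\int_{\R^N} G(x-y,t)\varphi(y)\, dy
\leq C t^{-N/2} \left(\sup_{\zeta\in \R^N} \int_{B(\zeta, t^{1/2})}\varphi(y)\, dy\right) \sum_{j\in\mathbb{Z}^N} e^{-c|x-\zeta_j|^2/t}.
\]

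The remaining step is to show that the lattice sum $\sum_{j\in\mathbb{Z}^N} e^{-c|x-\zeta_j|^2/t}$ is bounded by a constant depending only on $N$ and $c$, uniformly in $x$ and $t$. This is the only nontrivial piece, but it is standard: after the change of variable $j \mapsto j$ (since $|x-\zeta_j|^2/t = |x/t^{1/2} - j|^2$), the sum becomes $\sum_{j\in\mathbb{Z}^N} e^{-c|\xi - j|^2}$ with $\xi = x/t^{1/2}$, which is periodic in $\xi$ and therefore bounded by its supremum over the unit cube — a finite quantity since the series converges by comparison with a Gaussian integral. Collecting these bounds delivers the claimed inequality with $C$ depending only on $N$.

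The main obstacle, such as it is, is the geometric bookkeeping for the covering: ensuring that the overlap constant and the constants in the pointwise kernel estimate on each ball depend only on $N$. An alternative, essentially equivalent, route would replace the lattice covering by a dyadic annular decomposition $A_k := \{y : kt^{1/2} \leq |y-x| < (k+1)t^{1/2}\}$ and cover each $A_k$ by $O(k^{N-1}+1)$ balls of radius $t^{1/2}$; then $G(x-y,t) \leq Ct^{-N/2} e^{-k^2/4}$ on $A_k$ and one needs the elementary convergence of $\sum_k (k^{N-1}+1) e^{-k^2/4}$, which again depends only on $N$.
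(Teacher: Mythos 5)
Your argument is correct in substance, but it takes a genuinely different route from the paper: the paper does not prove this lemma at all, it simply invokes \cite[Lemma 2.1]{HI18} with $\theta=2$ (that reference establishes the same bound for the fractional heat kernel $G_\theta$, using exactly the kind of covering-plus-decay argument you describe). So what you have written is a self-contained replacement for an external citation, which is a reasonable thing to want; the trade-off is only length versus self-containedness, since the underlying mechanism is the same.

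One detail needs fixing. The balls $B(t^{1/2}j, t^{1/2})$, $j\in\mathbb{Z}^N$, do \emph{not} cover $\R^N$ once $N\geq 5$: the point of a lattice cell farthest from the vertices is at distance $t^{1/2}\sqrt{N}/2$ from the nearest lattice point, and $\sqrt{N}/2>1$ for $N\geq 5$ (and for $N=4$ the open balls just miss the cell centers). Use instead the rescaled lattice $\zeta_j=(t^{1/2}/\sqrt{N})\,j$; then every point lies within $t^{1/2}/2$ of some $\zeta_j$, the overlap of the balls $B(\zeta_j,t^{1/2})$ is still bounded by a dimensional constant, and the rest of your argument — the pointwise bound $G(x-y,t)\leq C t^{-N/2}e^{-c|x-\zeta_j|^2/t}$ on each ball via $|x-y|^2\geq\tfrac12|x-\zeta_j|^2-|y-\zeta_j|^2$, and the uniform boundedness of the periodic lattice sum $\sum_j e^{-c|\xi-j|^2}$ — goes through verbatim. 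Your dyadic-annulus alternative avoids this issue entirely and is, if anything, the cleaner write-up.
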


\begin{proof}
This follows from \cite[Lemma 2.1]{HI18} with $\theta=2$. 
\end{proof}

\section{Proof of Theorem \ref{th:main} for $p\geq p_F$}\label{sec:rmain}
In this section, we prove Theorem \ref{th:main} for $p\geq p_F$. 
We prepare an auxiliary function $H$, 
and then we define a candidate $\overline{u}$ of a supersolution. 
For $1<\alpha<\min\{p,N(p-1)/2\}$, $0<\beta<N/2$ and $A\geq e$, set
\begin{equation}\label{eq:HX}
	H(X):=
	\left\{ 
	\begin{aligned}
	&X^\alpha &&\mbox{ if } p>p_F, \\
	&X\left( \log(A+X) \right)^\beta &&\mbox{ if } p=p_F. 
	\end{aligned}
	\right.
\end{equation}
We fix $A$ so large that 
\begin{equation}\label{eq:monoX}
\begin{aligned}
	&\mbox{$X\mapsto H(X)$, 
	$X\mapsto X^p/H(X)$ and $X\mapsto H(X)/X$ are strictly increasing and }  \\
	&X\mapsto X^{-(2-\gamma)}\left( \log(A+X^{-1}) \right)^{-1-\beta}
	\mbox{ is strictly decreasing.} 
\end{aligned}
\end{equation}
Note that $H$ is convex and strictly increasing. 
In particular, the inverse function $H^{-1}$ exists and is strictly increasing. 
We can check that $H^{-1}$ satisfies 
\begin{equation}\label{eq:Hinv}
	H^{-1}(X)
	\left\{ 
	\begin{aligned}
	&= X^{1/\alpha} &&\mbox{ if } p>p_F, \\
	&\leq C X\left(\log(A+X)\right)^{-\beta} &&\mbox{ if } p=p_F, 
	\end{aligned}
	\right.
\end{equation}
where $C>0$ depends only on $A$ and $\beta$. 
Let $z\in \R^N\setminus\{0\}$. 
Then, for $c>0$ and $C_0\geq0$, we define 
\[
	\overline{u}(x,t) := 
	2^{\frac{N}{2}+1}c \psi(z) U(x,t) + 2C_0. 
\]
Here $\psi$ is defined by \eqref{eq:psizdef} and 
\[
\begin{aligned}
	&U(x,t):=H^{-1} \left( \int_{\R^N} G(x-y,2t) H(f(y)) dy \right), \\
	& f(x):= 
	\left\{ 
	\begin{aligned}
	& |x-z|^{-\frac{2}{p-1}} 
	&&\mbox{ if }p>p_F, \\
	& |x-z|^{-N} 
	\left( \log ( e+|x-z|^{-1} ) \right)^{-\frac{N}{2}-1} 
	\chi_1(|x-z|)
	&&\mbox{ if }p=p_F.  \\ 
	\end{aligned}
	\right. 
\end{aligned}
\]

We also prepare some estimates of $U$. 
Note that, for $p=p_F$, 
we can check that 
$H(f(y+z)) \leq C|y|^{-N} (\log(e+|y|^{-1}))^{-(N/2)-1+\beta} \chi_1(|y|)$
with a constant $C$ depending on $A$. Thus, 
\[
\begin{aligned}
	H(U(x,t)) &= 
	\int_{\R^N} G(x-z-y,2t) H(f(y+z)) dy \\
	&\leq 
	\left\{ 
	\begin{aligned}
	&\int_{\R^N} G(x-z-y,2t) |y|^{-\frac{2\alpha}{p-1}} dy  &&(p>p_F) \\
	&C \int_{\R^N} G(x-z-y,2t) |y|^{-N} 
	\left( \log(e+|y|^{-1}) \right)^{-\frac{N}{2}-1+\beta}\chi_1(|y|) dy 
	&&(p=p_F). 
	\end{aligned}
	\right. 
\end{aligned}
\]
By Lemmas \ref{lem:linxpoly} and \ref{lem:linxlog}, we have 
\[
\begin{aligned}
	H(U(x,t))&\leq 
	\left\{ 
	\begin{aligned}
	&C |x-z|^{-\frac{2\alpha}{p-1}}  &&\mbox{ if }p>p_F, \\
	&C |x-z|^{-N} \left( \log(e+|x-z|^{-1}) \right)^{-\frac{N}{2}+\beta} 
	&&\mbox{ if }p=p_F. 
	\end{aligned}
	\right. 
\end{aligned}
\]
Then the monotonicity of $H^{-1}$ together with \eqref{eq:Hinv} implies that 
\begin{equation}\label{eq:Uesti}
	U(x,t) \leq 
	\left\{ 
	\begin{aligned}
	&C |x-z|^{-\frac{2}{p-1}}  &&\mbox{ if }p>p_F, \\
	&C |x-z|^{-N} \left( \log(e+|x-z|^{-1}) \right)^{-\frac{N}{2}} 
	&&\mbox{ if }p=p_F. 
	\end{aligned}
	\right. 
\end{equation}

On the other hand, 
Lemma \ref{lem:lint}  yields
\[
\begin{aligned}
	H(U(x,t))
	&\leq 
	\left\{ 
	\begin{aligned}
	&Ct^{-\frac{N}{2}} 
	\sup_{\zeta\in\R^N} \int_{B(\zeta,(2t)^\frac{1}{2})} 
	|y|^{-\frac{2\alpha}{p-1}} dy  &&(p>p_F) \\
	&Ct^{-\frac{N}{2}} 
	\sup_{\zeta\in\R^N} \int_{B(\zeta,(2t)^\frac{1}{2})} 
	|y|^{-N} \left( \log(e+|y|^{-1}) \right)^{-\frac{N}{2}-1+\beta} \chi_1(|y|) dy 
	&&(p=p_F) 
	\end{aligned}
	\right. \\
	&\leq 
	\left\{ 
	\begin{aligned}
	&Ct^{-\frac{N}{2}} 
	\int_{B(0,(2t)^\frac{1}{2})} 
	|y|^{-\frac{2\alpha}{p-1}} dy  &&(p>p_F) \\
	&Ct^{-\frac{N}{2}} 
	\int_{B(0,(2t)^\frac{1}{2})} 
	(e|y|+1)\frac{|y|^{-N}}{e|y|+1} 
	\left( \log(e+|y|^{-1}) \right)^{-\frac{N}{2}-1+\beta} \chi_1(|y|)dy 
	&&(p=p_F)
	\end{aligned}
	\right. \\
	&\leq 
	\left\{ 
	\begin{aligned}
	&C t^{-\frac{\alpha}{p-1}}  &&(p>p_F) \\
	&C t^{-\frac{N}{2}} 
	\left( \log(e+t^{-\frac{1}{2}}) \right)^{-\frac{N}{2}+\beta} 
	&&(p=p_F). 
	\end{aligned}
	\right. 
\end{aligned}
\]
Then the monotonicity of $H^{-1}$ together with \eqref{eq:Hinv} implies that 
\begin{equation}\label{eq:Uestiuni}
	U(x,t) \leq 
	\left\{ 
	\begin{aligned}
	&C t^{-\frac{1}{p-1}}  &&\mbox{ if }p>p_F, \\
	&C t^{-\frac{N}{2}} \left( \log(e+t^{-\frac{1}{2}}) \right)^{-\frac{N}{2}} 
	&&\mbox{ if }p=p_F. 
	\end{aligned}
	\right. 
\end{equation}

We are now in a position to prove Theorem \ref{th:main}.

\begin{proof}[Proof of Theorem \ref{th:main} for $p\geq p_F$]
By Lemma \ref{lem:super}, 
it suffices to prove that $\overline{u}$ is a supersolution 
if $c$ is sufficiently small. 
First, we consider the case of $C_0=0$ and 
estimate $\Phi[\overline{u}]$. 
Remark that $u_0\leq c \psi(z) f$ with $c>0$. 
This together with Jensen's inequality gives 
\begin{equation}\label{eq:Jens}
\begin{aligned}
	\int_{\R^N} G(x-y,t) u_0(y) dy &\leq 
	c 2^\frac{N}{2} \psi(z) \int_{\R^N} G(x-y,2t) f(y) dy \\
	&= c2^\frac{N}{2} \psi(z) H^{-1} \circ H\left( 
	\frac{\int_{\R^N} G(x-y,2t) f(y) dy }{\int_{\R^N} G(x-y,2t) dy }  \right) \\
	&\leq c2^\frac{N}{2} \psi(z) U(x,t) 
	= \frac{1}{2} \overline{u}(x,t). 
\end{aligned}
\end{equation}
By Fubini's theorem and Lemma \ref{lem:Gtra}, we have 
\begin{equation}\label{eq:Jeq}
\begin{aligned}
	&\int_0^t \int_{\R^N} G(x-y,t-s) |y|^{-\gamma} \overline{u}(y,s)^p  dyds \\
	&= 
	c^p 2^{\frac{N}{2}(p+1)+p} \psi(z)^p
	\int_0^t \int_{\R^N} 
	G(x-y,2(t-s)) e^{-\frac{|x-y|^2}{8(t-s)}} 
	 |y|^{-\gamma} \frac{U(y,2s)^p}{H(U(y,2s))} \\
	&\quad \times \int_{\R^N}  G(y-\eta,2s)  H(f(\eta)) d\eta dyds  \\
	&= 
	c^p 2^{\frac{N}{2}(p+1)+p} \psi(z)^p 
	\int_{\R^N}  G(x-\eta,2t) H(f(\eta)) 
	J(x,\eta,t) d\eta, 
\end{aligned}
\end{equation}
where 
\[
	J(x,\eta,t):= 
	\int_0^t \int_{\R^N}  
	G\left(y-\frac{s}{t}x-\frac{t-s}{t}\eta, 2 \frac{s(t-s)}{t} \right) 
	e^{-\frac{|x-y|^2}{8(t-s)} }
	|y|^{-\gamma} \frac{U(y,2s)^p}{H(U(y,2s))} dyds. 
\]

We claim that 
\begin{equation}\label{eq:Jcla}
	J(x,\eta,t) \leq 
	\left\{ 
	\begin{aligned}
	&C \psi(z)^{-(p-1)} t^\frac{\alpha-1}{p-1} 
	&&\mbox{ if }p>p_F, \\
	& C \psi(z)^{-(p-1)} (1+t^\frac{1}{2}+t^{1-\frac{\gamma}{2}}) 
	\left( \log(e+t^{-\frac{1}{2}}) \right)^{-\beta}
	&&\mbox{ if }p=p_F
	\end{aligned}
	\right.  
\end{equation}
for any $t>0$ with a constant $C>0$ independent of $z$. 
Set $\Omega_1:=\{y\in\R^N; |y|\leq |z|/2\}$ and 
$\Omega_2:=\{y\in\R^N; |y|\geq |z|/2\}$. 
Lemma \ref{lem:Gmid} shows that 
\[
\begin{aligned}
	J &\leq 
	C \int_0^t \int_{\R^N}  
	( G( y-\eta, 35 s) + G( y-x, 35 (t-s) ) )
	|y|^{-\gamma} \frac{U(y,2s)^p}{H(U(y,2s))} dyds \\
	&= C \int_0^t \int_{\Omega_1} + C\int_0^t \int_{\Omega_2} 
	=: C J_1 + C J_2. 
\end{aligned}
\]
In what follows, we write 
$\|U(\cdot,t)\|_\infty:= \|U(\cdot,t)\|_{L^\infty(\R^N)}$ and 
$|U(x,\cdot)|_\infty:=|U(x,\cdot)|_{L^\infty((0,\infty))}$.

Let us first estimate $J_1$. 
The monotonicity of $X\mapsto X^p/H(X)$ in \eqref{eq:monoX} yields 
\[
\begin{aligned}
	J_1 &\leq 
	C \int_0^t \int_{\Omega_1}  
	( G( y-\eta, 35 s) + G( y-x, 35 (t-s) )
	|y|^{-\gamma} \frac{|U(y,\cdot)|_\infty^p}{H(|U(y,\cdot)|_\infty)} dyds. 
\end{aligned}
\]
From \eqref{eq:Uesti}, $|y-z|\geq |y|$ for $y\in \Omega_1$, 
$N(p_F-1)=2$ and the monotonicity of 
$X\mapsto X^{-(2-\gamma)}(\log(A+X^{-1}))^{-1-\beta}$ in \eqref{eq:monoX}, 
it follows that 
\[
\begin{aligned}
	\frac{| U(y,\cdot)|_\infty^p}{H(|U(y,\cdot)|_\infty)}  
	&\leq 
	\left\{ 
	\begin{aligned}
	&C |y-z|^{-\frac{2(p-\alpha)}{p-1}} &&(p>p_F) \\
	&C |y-z|^{-2} \left( \log(A+|y-z|^{-1}) \right)^{-1-\beta}
	&&(p=p_F) 
	\end{aligned}
	\right.  \\
	&\leq 
	\left\{ 
	\begin{aligned}
	&C |z|^{-\gamma} |y|^{-(\frac{2(p-\alpha)}{p-1}-\gamma)} &&(p>p_F) \\
	&C |z|^{-\gamma} |y|^{-(2-\gamma)} \left( \log(e+|y|^{-1}) \right)^{-1-\beta}
	&&(p=p_F, N>2) \\
	&C |z|^{-2}
	&&(p=p_F, N\leq2) 
	\end{aligned}
	\right.   
\end{aligned}
\]
for $y\in \Omega_1$, and so 
\[
\begin{aligned}
	J_1	&\leq 
	\left\{ 
	\begin{aligned}
	&C |z|^{-\gamma}  \sup_{\xi\in\R^N}
	\int_0^t \int_{\Omega_1}  
	G( y-\xi, 35 s) 
	|y|^{-\frac{2(p-\alpha)}{p-1}} dyds
	&&\mbox{ if }p>p_F,  \\
	&C |z|^{-\gamma}  \sup_{\xi\in\R^N} \int_0^t \int_{\Omega_1}  
	G( y-\xi, 35 s) 
	\frac{|y|^{-2}}{ \left( \log(e+|y|^{-1}) \right)^{1+\beta} }
	dyds
	&&\mbox{ if }p=p_F, N>2, \\
	&C |z|^{-2}  \sup_{\xi\in\R^N} \int_0^t \int_{\Omega_1}  
	G( y-\xi, 35 s) |y|^{-\gamma} dyds
	&&\mbox{ if }p=p_F, N\leq2.  
	\end{aligned}
	\right.   
\end{aligned}
\]
Then, Lemma \ref{lem:lint} and straightforward computations show that, 
for any $t>0$, 
\begin{equation}\label{eq:J1escal}
\begin{aligned}
	J_1 &\leq
	\left\{ 
	\begin{aligned}
	&C |z|^{- \gamma} \int_0^t s^{-\frac{p-\alpha}{p-1}}  ds &&(p>p_F) \\
	&C |z|^{- \gamma} \int_0^t (es^\frac{1}{2}+1) 
	\frac{s^{-1}}{es^{1/2}+1}\left( \log(e+s^{-\frac{1}{2}}) \right)^{-1-\beta}  ds
	&&(p=p_F, N>2) \\
	&C |z|^{-2} \int_0^t s^{-\frac{\gamma}{2}} ds
	&&(p=p_F, N\leq2)
	\end{aligned}
	\right.   \\
	&\leq
	\left\{ 
	\begin{aligned}
	&C |z|^{-\gamma} t^\frac{\alpha-1}{p-1}  ds &&(p>p_F) \\
	&C |z|^{-\gamma} (1+t^\frac{1}{2})  \left( \log(e+t^{-\frac{1}{2}}) \right)^{-\beta} 
	&&(p=p_F, N\geq3) \\
	&C |z|^{-2} t^{1-\frac{\gamma}{2}} &&(p=p_F, N=1,2). 
	\end{aligned}
	\right.   
\end{aligned}
\end{equation}
Note that 
\[
	|z|^{-2} t^{1-\frac{\gamma}{2}}
	\leq C|z|^{-2}(1+|z|)^{2-\gamma} (1+t^\frac{1}{2}+t^{1-\frac{\gamma}{2}}) 
	\left( \log(e+t^{-\frac{1}{2}}) \right)^{-\beta}
\]
for any $t>0$, where this constant $C>0$ is independent of $z$.

We next examine $J_2$. 
By using \eqref{eq:Uestiuni}, we can check that 
\[
\begin{aligned}
	\frac{\| U(\cdot,2s)\|_\infty^p}{H(\|U(\cdot,2s)\|_\infty)}  
	&\leq 
	\left\{ 
	\begin{aligned}
	&C s^{-\frac{p-\alpha}{p-1}} &&\mbox{ if }p>p_F, \\
	&C s^{-1} \left( \log(e+s^{-\frac{1}{2}}) \right)^{-1-\beta}
	&&\mbox{ if }p=p_F. 
	\end{aligned}
	\right.  
\end{aligned}
\]
This together with $\int_{\R^N} G dy =1$ yields 
\[
\begin{aligned}
	J_2 &\leq 
	\left\{ 
	\begin{aligned}
	&C |z|^{-\gamma}\int_0^t s^{-\frac{p-\alpha}{p-1}} ds
	&&(p>p_F) \\
	& C |z|^{-\gamma} \int_0^t 
	(es^{1/2}+1)
	\frac{s^{-1}}{es^{1/2}+1} \left( \log(e+s^{-\frac{1}{2}}) \right)^{-1-\beta} ds
	&&(p=p_F)
	\end{aligned}
	\right.   \\
	&\leq 
	\left\{ 
	\begin{aligned}
	&C |z|^{-\gamma}t^\frac{\alpha-1}{p-1} 
	&&(p>p_F) \\
	& C |z|^{-\gamma} (1+t^\frac{1}{2}) 
	\left( \log(e+t^{-\frac{1}{2}}) \right)^{-\beta} 
	&&(p=p_F). 
	\end{aligned}
	\right.
\end{aligned}
\]
Note that 
\[
	|z|^{-\gamma} (1+t^\frac{1}{2}) \left( \log(e+t^{-\frac{1}{2}}) \right)^{-\beta} 
	\leq C|z|^{-2}(1+|z|)^{2-\gamma} (1+t^\frac{1}{2}+t^{1-\frac{\gamma}{2}}) 
	\left( \log(e+t^{-\frac{1}{2}}) \right)^{-\beta}
\]
for any $t>0$ with a constant $C>0$ independent of $z$. 
Therefore, the above estimates 
show the desired inequality \eqref{eq:Jcla}.

The equality \eqref{eq:Jeq} and the estimate \eqref{eq:Jcla} imply that 
\[
\begin{aligned}
	&\int_0^t \int_{\R^N} G(x-y,t-s) |y|^{-\gamma} \overline{u}(y,s)^p  dyds \\
	&\leq  
	\left\{ 
	\begin{aligned}
	&C c^p \psi(z) t^\frac{\alpha-1}{p-1}
	\int_{\R^N}  G(x-\eta,2t) H(f(\eta))  d\eta 
	&&\mbox{ if }p>p_F,  \\
	&C c^p \psi(z) 
	(1+t^\frac{1}{2}+t^{1-\frac{\gamma}{2}}) 
	\left( \log(e+ t^{-\frac{1}{2}}) \right)^{-\beta}
	\int_{\R^N}  G(x-\eta,2t) H(f(\eta))  d\eta 
	&&\mbox{ if }p=p_F. 
	\end{aligned}
	\right. 
\end{aligned}
\]
By the monotonicity of $X\mapsto H(X)/X$ in \eqref{eq:monoX}, we have 
\begin{equation}\label{eq:GHflin}
	\int_{\R^N}  G(x-\eta,2t) H(f(\eta)) d\eta
	= H(U(x,t)) 
	\leq 
	\frac{H(\|U(\cdot,t)\|_\infty)}{\|U(\cdot,t)\|_\infty} U(x,t). 
\end{equation}
Then \eqref{eq:Uestiuni} gives 
\[
\begin{aligned}
	\int_{\R^N}  G(x-\eta,2t) H(f(\eta)) d\eta
	&\leq
	\left\{ 
	\begin{aligned}
	&C \|U(\cdot,t)\|_\infty^{\alpha-1} U(x,t) &&(p>p_F) \\
	&C \left(  \log( A+ \|U(\cdot,t)\|_\infty ) \right)^\beta 
	U(x,t) &&(p=p_F) \\
	\end{aligned}
	\right. \\
	&\leq
	\left\{ 
	\begin{aligned}
	&C t^{-\frac{\alpha-1}{p-1}} U(x,t) &&(p>p_F) \\
	&C  \left( \log( e+ t^{-\frac{1}{2}} )\right)^\beta 
	U(x,t) &&(p=p_F),  \\
	\end{aligned}
	\right. \\
\end{aligned}
\]
and so 
\begin{equation}\label{eq:uUcal}
\begin{aligned}
	\int_0^t \int_{\R^N} G(x-y,t-s) |y|^{-\gamma} \overline{u}(y,s)^p  dyds 
	&\leq 
	\left\{
	\begin{aligned}
	&C c^p \psi(z) U(x,t)  
	&&(p>p_F) \\
	&C c^p \psi(z) (1+t^\frac{1}{2}+t^{1-\frac{\gamma}{2}})  U(x,t) 
	&&(p=p_F) \\
	\end{aligned}
	\right. \\
	&= 
	\left\{
	\begin{aligned}
	&C c^{p-1} \overline{u}(x,t)  
	&&(p>p_F) \\
	&C c^{p-1} (1+t^\frac{1}{2}+t^{1-\frac{\gamma}{2}})  \overline{u}(x,t) &&(p=p_F). 
	\end{aligned}
	\right. 
\end{aligned}
\end{equation}
Then, in the case of $C_0=0$, the above computations imply that 
\[
\begin{aligned}
	\Phi[\overline{u}]
	&\leq 
	\left\{
	\begin{aligned}
	&(2^{-1} + C c^{p-1}) \overline{u}  
	&&\mbox{ if }p>p_F, \\
	&(2^{-1} + C c^{p-1} (1+t^\frac{1}{2}+t^{1-\frac{\gamma}{2}})  ) \overline{u} 
	&&\mbox{ if }p=p_F 
	\end{aligned}
	\right. 
\end{aligned}
\]
for any $t>0$, where $C>0$ is a constant depending on $N$, $p$ and $\gamma$ 
but not on $t$ and $z$. 
In particular, for $p>p_F$, 
$\overline{u}$ is a supersolution on $\R^N\times [0,\infty)$ 
if $c$ is sufficiently small depending only on $N$, $p$ and $\gamma$. 
This together with Lemma \ref{lem:super} shows that 
\eqref{eq:main} possesses a solution on $\R^N\times[0,\infty)$ 
provided that $p>p_F$, $C_0=0$ and $c$ is small enough.

Let us next consider the case of $C_0\geq0$. 
Fix $0<T<\infty$. 
By Lemma \ref{lem:lint} and the same computations as 
\eqref{eq:Jens} and \eqref{eq:uUcal}, we have 
\[
\begin{aligned}
	\Phi[\overline{u}] 
	&\leq 
	\left\{
	\begin{aligned}
	&2^{\frac{N}{2}+1} c \psi(z) 
	(2^{-1}+\tilde C c^{p-1})U
	+(1+\tilde C C_0^{p-1} T^{1-\frac{\gamma}{2}} )C_0
	&&\mbox{ if }p>p_F, \\
	&2^{\frac{N}{2}+1} c  \psi(z)
	(2^{-1}+\tilde C (1+T^\frac{1}{2}+T^{1-\frac{\gamma}{2}})c^{p-1}) U
	+(1+\tilde C  C_0^{p-1} T^{1-\frac{\gamma}{2}} )C_0
	&&\mbox{ if }p=p_F  \\
	\end{aligned}
	\right.
\end{aligned}
\]
for any $0<t<T$, where $\tilde C >0$ 
is a constant depending on $N$, $p$ and $\gamma$ 
but not on $T$ and $z$. 
If $c$ and $C_0$ satisfy 
\[
	c\leq \left\{ 
	\begin{aligned}
	& (2\tilde C)^{-\frac{1}{p-1}} &&\mbox{ if }p>p_F, \\
	& (2\tilde C)^{-\frac{1}{p-1}}
	(1+T^\frac{1}{2}+T^{1-\frac{\gamma}{2}})^{-\frac{1}{p-1}}&&\mbox{ if }p=p_F, \\
	\end{aligned}
	\right.
	\qquad 
	C_0\leq \tilde C^{-\frac{1}{p-1}} T^{-\frac{2-\gamma}{2(p-1)}}, 
\]
then $\overline{u}$ is a supersolution on $\R^N\times [0,T)$. 
By Lemma \ref{lem:super}, we obtain a solution on $\R^N\times [0,T)$. 
The proof is complete. 
\end{proof}

\begin{remark}\label{rem:imprcri}
We consider the case of  $p=p_F$ under the additional assumption 
that $0<T<\infty$ and $z\neq0$ satisfy \eqref{eq:Tzas}. 
For $0<t<T \, (\leq |z|^2)$, 
we can improve the case of  $N\leq2$ in \eqref{eq:J1escal} 
to $J_1\leq C|z|^{-\gamma}$, and then we have 
\[
	J(x,\eta,t) \leq 
	C |z|^{-\gamma} (1+t^\frac{1}{2}) 
	\left( \log(e+t^{-\frac{1}{2}}) \right)^{-\beta}
\]
for any $0<t<T$ with a constant $C>0$ independent of $T$ and $z$. 
Thus, by replacing $\psi$ with $|z|^{\gamma/(p-1)}$ for each case, 
we see that the function 
$\overline{u}^+(x,t) := 2^{(N/2)+1}c |z|^{\gamma/(p-1)} U(x,t) + 2C_0$
satisfies 
\[
	\Phi[\overline{u}^+]
	\leq 
	(2^{-1} + C c^{p-1} (1+T^\frac{1}{2})  ) \overline{u}^+ 
	\qquad \mbox{ for any }0<t<T. 
\]
This improves the condition on $\psi$ and $c$ 
in the statement of Theorem {\rm \ref{th:main}} for $p=p_F$. 
We note that the condition on $c$ is improved to 
$c\leq c_* (1+T^{1/2})^{-{1/(p-1)}}$ for $p=p_F$. 
\end{remark}

\section{Proof of Theorem \ref{th:main} for $p< p_F$}\label{sec:rrrmain}
Let $z\in \R^N\setminus\{0\}$ and $\phi\in L^\infty(\R^N)$. 
For $c>0$, we set 
\[
	\overline{w}(x,t) := 2^{\frac{N}{2}+1} c \psi(z) G(x-z,2t) 
	+ 2C_0, 
\]
where $C_0 :=\|\phi\|_{L^\infty(\R^N)}$.

\begin{proof}[Proof of Theorem \ref{th:main} for $p<p_F$]
We check that $\overline{w}$ is a supersolution of \eqref{eq:main} 
if $c$ is small. 
The assumption on $u_0$ gives 
\[
	\int_{\RN} G(x-y,t)d u_0 (y) 
	\leq c \psi(z) G(x-z,t) +C_0
	\leq \frac{1}{2} \overline{w}(x,t). 
\]
By similar computations to \eqref{eq:Jeq} and Lemma \ref{lem:lint}, 
we have 
\[
\begin{aligned}
	&\int_0^t \int_{\R^N} G(x-y,t-s) |y|^{-\gamma} \overline{w}(y,s)^p dyds 
	\leq C c^p \psi(z)^p \overline{J} G(x-z,2t) + 
	CC_0^p t^{1-\frac{\gamma}{2}}, \\ 
	&\overline{J}(x,t):= 
	\int_0^t \int_{\R^N} 
	G\left(y-\frac{s}{t}x-\frac{t-s}{t}z, 2 \frac{s(t-s)}{t} \right) 
	e^{-\frac{|x-y|^2}{8(t-s)} }
	|y|^{-\gamma} G(y-z,2s)^{p-1} dyds. 
\end{aligned}
\]

We estimate $\overline{J}$.
Set $\Omega_1:=\{y\in\R^N; |y|\leq |z|/2\}$, 
$\Omega_2:=\{y\in\R^N; |y|\geq |z|/2\}$ and 
$\overline{J}=\int_0^t\int_{\Omega_1}+\int_0^t\int_{\Omega_2}
=:\overline{J}_1+\overline{J}_2$. 
For $y\in \Omega_1$, we see that 
\[
	G(y-z,2s) = (8\pi)^{-\frac{N}{2}} s^{\frac{1}{p-1}-\frac{N}{2}} 
	s^{-\frac{1}{p-1}} e^{-\frac{|y-z|^2}{8s}} 
	\leq Cs^{\frac{1}{p-1}-\frac{N}{2}} |y-z|^{-\frac{2}{p-1}} 
	\leq Cs^{\frac{1}{p-1}-\frac{N}{2}} |z|^{-\frac{2}{p-1}}. 
\]
This together with Lemmas \ref{lem:Gmid} and \ref{lem:lint} yields 
\begin{equation}\label{eq:J1bar}
\begin{aligned}
	\overline{J}_1&\leq 
	C t^{1-\frac{N}{2}(p-1)} |z|^{-2}
	\int_0^t \int_{\Omega_1} ( G( y-z, 35 s) + G( y-x, 35 (t-s) ))
	|y|^{-\gamma} dyds \\
	&\leq 
	C t^{1-\frac{N}{2}(p-1)+1-\frac{\gamma}{2}} |z|^{-2} 
	\leq  
	C t^{\frac{N}{2}(p_F-p)}(1+t^{1-\frac{\gamma}{2}}) \psi(z)^{-(p-1)}
\end{aligned}
\end{equation}
for any $t>0$. 
On the other hand, by $G(y-z,2s)\leq Cs^{-N/2}$ and $\int_{\R^N} G dy=1$, 
we have 
\[
\begin{aligned}
	\overline{J}_2&\le 
	\int_0^t \int_{\Omega_2} 
	G\left(y-\frac{s}{t}x-\frac{t-s}{t}z, 2 \frac{s(t-s)}{t} \right) 
	|y|^{-\gamma} G(y-z,2s)^{p-1} dyds \\
	&\leq 
	C|z|^{-\gamma}\int_0^t s^{-\frac{N}{2}(p-1)}\int_{\Omega_2} 
	G\left(y-\frac{s}{t}x-\frac{t-s}{t}z, 2 \frac{s(t-s)}{t} \right) dyds \\
	&\leq C|z|^{-\gamma} t^{\frac{N}{2}(p_F-p)}
	\leq 
	C t^{\frac{N}{2}(p_F-p)}(1+t^{1-\frac{\gamma}{2}}) \psi(z)^{-(p-1)}
\end{aligned}
\]
for any $t>0$. Hence 
$\overline{J}(x,t)\leq 
C t^{(N/2)(p_F-p)}(1+t^{1-(\gamma/2)}) \psi(z)^{-(p-1)}$ 
for $t>0$. 

Fix $0<T<\infty$. 
The above computations imply that
\[
	\Phi[\overline{w}] 
	\leq 
	\frac{1}{2} \overline{w} + 
	\tilde C c^{p-1} T^{\frac{N}{2}(p_F-p)} (1+T^{1-\frac{\gamma}{2}}) 
	c\psi(z) G(x-z,2t)
	+ \tilde C C_0^p T^{1-\frac{\gamma}{2}}  
\]
for any $0<t<T$, 
where $\tilde C>0$ is a constant independent of $T$ and $z$. 
If $c$ and $C_0$ satisfy 
\[
	c\leq (2\tilde C)^{-\frac{1}{p-1}}
	T^{-\frac{N(p_F-p)}{2(p-1)}} (1+T^{1-\frac{\gamma}{2}})^{-\frac{1}{p-1}}, 
	\qquad 
	C_0\leq (2\tilde C)^{-\frac{1}{p-1}} T^{-\frac{2-\gamma}{2(p-1)}}, 
\]
then $\overline{w}$ is a supersolution on $\R^N\times [0,T)$. 
By Lemma \ref{lem:super}, we obtain a solution on $\R^N\times [0,T)$. 
The proof is complete. 
\end{proof}

\begin{remark}\label{rem:imprsubcri}
We consider the case of  $p<p_F$ under \eqref{eq:Tzas}. 
For $0<t<T \, (\leq |z|^2)$, we can improve \eqref{eq:J1bar} 
to $J_1\leq Ct^{(N/2)(p_F-p)}|z|^{-\gamma}$, and then we have 
$\overline{J}(x,t) \leq Ct^{(N/2)(p_F-p)} |z|^{-\gamma}$ 
for any $0<t<T$ with a constant $C>0$ independent of $T$ and $z$. 
Thus, we see that the function 
$\overline{w}^+(x,t) := 2^{(N/2)+1}c |z|^{\gamma/(p-1)} G(x-z,2t) + 2C_0$
satisfies 
\[
	\Phi[\overline{w}^+]
	\leq 
	\frac{1}{2} \overline{w}^+ + 
	C c^{p-1} T^{\frac{N}{2}(p_F-p)} 
	c|z|^\frac{\gamma}{p-1} G(x-z,2t)
	+ C C_0^p T^{1-\frac{\gamma}{2}} 
	\mbox{ for any }0<t<T. 
\]
This improves the condition on $\psi$ and $c$ 
in the statement of Theorem {\rm \ref{th:main}} for $p<p_F$. 
The improved condition on $c$ is $c\leq c_* T^{-N(p_F-p)/(2(p-1))}$ for $p<p_F$. 
\end{remark}

\section{Proof of Theorem~\ref{th:origin} for $p\geq p_\gamma$} \label{sec:1.2sup}
Theorem \ref{th:origin} with $p>p_\gamma$ was proved in \cite{STW,HS}. 
However, we handle $p>p_\gamma$ and $p=p_\gamma$ in a unified way. 
Let $H$ be as in \eqref{eq:HX}, 
where $A$ is chosen so large that \eqref{eq:monoX} holds. 
Fix $z\in\R^N$. 
For $c>0$, define 
\[
	\overline{v}(x,t) := 2c V(x,t) + 2C_0, 
\]
where
\[
\begin{aligned}
	&V(x,t):=H^{-1} \left( \int_{\R^N} G(x-y,t) H(g(y)) dy \right), \\
	& g(x):= 
	\left\{ 
	\begin{aligned}
	& |x-z|^{-\frac{2-\gamma}{p-1}} 
	&&\mbox{ if }p>p_\gamma, \\
	& |x-z|^{-N} 
	\left( \log \left( e+|x-z|^{-1} \right) \right)^{-\frac{N}{2-\gamma}-1} 
	\chi_1(|x-z|)
	&&\mbox{ if }p=p_\gamma. 
	\end{aligned}
	\right.
\end{aligned}
\]

We give estimates of $V$. 
Lemma \ref{lem:lint}  shows that 
\[
\begin{aligned}
	H(V(x,t))
	&\leq 
	\left\{ 
	\begin{aligned}
	&Ct^{-\frac{N}{2}} 
	\int_{B(0,t^\frac{1}{2})} 
	|y|^{-\frac{(2-\gamma)\alpha}{p-1}} dy  &&(p>p_\gamma) \\
	&Ct^{-\frac{N}{2}} 
	\int_{B(0,t^\frac{1}{2})} 
	(e|y|+1)\frac{|y|^{-N}}{e|y|+1} 
	\left( \log(e+|y|^{-1}) \right)^{-\frac{N}{2-\gamma}-1+\beta} \chi_1(|y|)dy 
	&&(p=p_\gamma)
	\end{aligned}
	\right. \\
	&\leq 
	\left\{ 
	\begin{aligned}
	&C t^{-\frac{(2-\gamma)\alpha}{2(p-1)}}  &&(p>p_\gamma) \\
	&C t^{-\frac{N}{2}} 
	\left( \log(e+t^{-\frac{1}{2}}) \right)^{-\frac{N}{2-\gamma}+\beta} 
	&&(p=p_\gamma). 
	\end{aligned}
	\right. 
\end{aligned}
\]
Then the monotonicity of $H^{-1}$ together with \eqref{eq:Hinv} implies that 
\begin{equation}\label{eq:Vestiuni}
	V(x,t) \leq 
	\left\{ 
	\begin{aligned}
	&C t^{-\frac{2-\gamma}{2(p-1)}}  &&\mbox{ if }p>p_\gamma, \\
	&C t^{-\frac{N}{2}} 
	\left( \log(e+t^{-\frac{1}{2}}) \right)^{-\frac{N}{2-\gamma}} 
	&&\mbox{ if }p=p_\gamma. 
	\end{aligned}
	\right. 
\end{equation}

We are now in a position to prove Theorem \ref{th:origin}.

\begin{proof}[Proof of Theorem \ref{th:origin} for $p\geq p_\gamma$]
We only give a proof in the case of  $C_0=0$, 
since the case of  $C_0>0$ can be handled in the same way 
as in the last part of Section \ref{sec:rmain}. 
We check that $\overline{v}$ is a supersolution of \eqref{eq:main} 
if $c$ is small. 
By the same computations 
as \eqref{eq:Jens} and \eqref{eq:Jeq}, we have 
\begin{equation}\label{eq:Jtileq} 
\begin{aligned} 
	& \int_{\R^N} G(x-y,t) u_0(y) dy \leq 
	c \int_{\R^N} G(x-y,t) g(y) dy \leq c V(x,t) 
	= \frac{1}{2} \overline{v}(x,t), \\
	& \int_0^t \int_{\R^N} G(x-y,t-s) |y|^{-\gamma} \overline{v}(y,s)^p  dyds 
	= 2^p c^p \int_{\R^N}  G(x-\eta,t) H(g(\eta)) 
	\tilde J(x,\eta,t) d\eta, 
\end{aligned}
\end{equation}
where 
\[
	\tilde J(x,\eta,t):= 
	\int_0^t \int_{\R^N}  
	G\left(y-\frac{s}{t}x-\frac{t-s}{t}\eta, \frac{s(t-s)}{t} \right) 
	|y|^{-\gamma} \frac{V(y,s)^p}{H(V(y,s))} dyds. 
\]

In what follows, we write 
$\|V(\cdot,t)\|_\infty:= \|V(\cdot,t)\|_{L^\infty(\R^N)}$. 
Then the monotonicity of $X\mapsto X^p/H(X)$ in \eqref{eq:monoX} implies that 
\[
\begin{aligned}
	\tilde J(x,\eta,t) &\leq 
	\int_0^t \frac{\|V(\cdot,s)\|_\infty^p}{H(\|V(\cdot,s)\|_\infty)}
	\int_{\R^N} G\left(y-\frac{s}{t}x-\frac{t-s}{t}\eta, \frac{s(t-s)}{t} \right) 
	|y|^{-\gamma}  dyds. 
\end{aligned}
\]
Note that 
\begin{equation}\label{eq:Gxi}
\begin{aligned}
	\sup_{\xi\in\R^N} \int_{\R^N} G\left(y-\xi, \frac{s(t-s)}{t} \right) 
	|y|^{-\gamma}  dy 
	&= 
	\int_{\R^N} G\left(y, \frac{s(t-s)}{t} \right) |y|^{-\gamma}  dy \\
	&\leq C\left(\frac{s(t-s)}{t}\right)^{-\frac{\gamma}{2}}. 
\end{aligned}
\end{equation}
Moreover, by \eqref{eq:Vestiuni}, we have 
\[
	\frac{\|V(\cdot,s)\|_\infty^p}{H(\|V(\cdot,s)\|_\infty)}
	\leq 
	\left\{
	\begin{aligned}
	&Cs^{-\frac{(2-\gamma)(p-\alpha)}{2(p-1)}}&&\mbox{ if }p>p_\gamma,  \\
	&Cs^{-\frac{N}{2}(p-1)} \left( \log(e+s^{-\frac{1}{2}} ) \right)^{-1-\beta}
	&&\mbox{ if }p=p_\gamma. 
	\end{aligned}
	\right. 
\]
Recall $p_\gamma=1+(2-\gamma)/N$. Then,  
\[
\begin{aligned}
	\tilde J &  \leq 
	\left\{
	\begin{aligned}
	&C \int_0^{t/2} + C \int_{t/2}^t s^{-\frac{(2-\gamma)(p-\alpha)}{2(p-1)}} 
	\left(\frac{s(t-s)}{t}\right)^{-\frac{\gamma}{2}} ds  &&(p>p_\gamma) \\
	&C \int_0^{t/2} + C \int_{t/2}^t 
	s^{-\frac{N}{2}(p-1)} \left( \log(e+s^{-\frac{1}{2}}) \right)^{-1-\beta} 
	\left(\frac{s(t-s)}{t}\right)^{-\frac{\gamma}{2}} ds  &&(p=p_\gamma)
	\end{aligned}
	\right.  \\
	&\leq 
	\left\{
	\begin{aligned}
	&C \int_0^{t/2} s^{-\frac{(2-\gamma)(p-\alpha)}{2(p-1)} -\frac{\gamma}{2}} ds 
	+ C t^{-\frac{(2-\gamma)(p-\alpha)}{2(p-1)}}  
	\int_{t/2}^t (t-s)^{-\frac{\gamma}{2}} ds  &&(p>p_\gamma) \\
	&C \int_0^{t/2} s^{-1} \left( \log(e+s^{-\frac{1}{2}}) \right)^{-1-\beta} ds
	+ C t^{\frac{\gamma}{2}-1} \left( \log(e+t^{-\frac{1}{2}}) \right)^{-1-\beta} 
	\int_{t/2}^t (t-s)^{-\frac{\gamma}{2}} ds  &&(p=p_\gamma). 
	\end{aligned}
	\right.  \\
\end{aligned}
\]
Hence the same computations as \eqref{eq:J1escal} yield 
\[
	\tilde J(x,t,\eta) \leq 
	\left\{
	\begin{aligned}
	&C t^\frac{(2-\gamma)(\alpha-1)}{2(p-1)}
	&&\mbox{ if }p>p_\gamma, \\
	&C (1+t^\frac{1}{2})  \left( \log(e+t^{-\frac{1}{2}}) \right)^{-\beta} 
	&&\mbox{ if }p=p_\gamma. 
	\end{aligned}
	\right.  
\]
From this, it follows that 
\[
\begin{aligned}
	&\int_0^t \int_{\R^N} G(x-y,t-s) |y|^{-\gamma} \overline{v}(y,s)^p  dyds \\
	&\leq  
	\left\{ 
	\begin{aligned}
	&C c^p t^\frac{(2-\gamma)(\alpha-1)}{2(p-1)}
	\int_{\R^N}  G(x-\eta,t) H(g(\eta))  d\eta 
	&&\mbox{ if }p>p_\gamma, \\
	&C c^p (1+t^\frac{1}{2}) \left( \log(e+ t^{-\frac{1}{2}}) \right)^{-\beta}
	\int_{\R^N}  G(x-\eta,t) H(g(\eta))  d\eta 
	&&\mbox{ if }p=p_\gamma. 
	\end{aligned}
	\right. 
\end{aligned}
\]
By \eqref{eq:Vestiuni} together with 
the same computations as \eqref{eq:GHflin}, we have 
\[
	\int_{\R^N}  G(x-\eta,t) H(g(\eta)) d\eta
	\leq \frac{H(\|V(\cdot,t)\|_\infty)}{\|V(\cdot,t)\|_\infty} V(x,t) 
	\leq 
	\left\{ 
	\begin{aligned}
	&Ct^{-\frac{(2-\gamma)(\alpha-1)}{2(p-1)}} V&&\mbox{ if }p>p_\gamma, \\
	&C\left( \log(e+t^{-\frac{1}{2}}) \right)^\beta V &&\mbox{ if }p>p_\gamma,
	\end{aligned}
	\right.
\]
so that 
\[
	\int_0^t \int_{\R^N} G(x-y,t-s) |y|^{-\gamma} \overline{v}(y,s)^p  dyds
	\leq 
	\left\{ 
	\begin{aligned}
	&Cc^{p-1} \overline{v}&&\mbox{ if }p>p_\gamma, \\
	&Cc^{p-1} (1+t^\frac{1}{2}) \overline{v} &&\mbox{ if }p>p_\gamma. 
	\end{aligned}
	\right.
\]

The above computations imply that 
\[
\begin{aligned}
	\Phi[\overline{v}]
	&\leq 
	\left\{
	\begin{aligned}
	&(2^{-1} + C c^{p-1}) \overline{v} 
	&&\mbox{ if }p>p_\gamma, \\
	&(2^{-1} + C c^{p-1}  (1+t^\frac{1}{2}) ) \overline{v} 
	&&\mbox{ if }p=p_\gamma, 
	\end{aligned}
	\right. 
\end{aligned}
\]
where $C>0$ is a constant depending on $N$, $p$, $\gamma$ 
but not on $c$. 
Then $\overline{v}$ is a supersolution of \eqref{eq:main} 
if we restrict the range of $t$ suitably and $c$ is small. 
The proof is complete. 
\end{proof}

\section{Proof of Theorem \ref{th:origin} for $p< p_\gamma$}\label{sec:1.2sub}
Let $z\in \R^N$ and $\phi\in L^\infty(\R^N)$. For $c>0$, we define
\[
	\tilde w(x,t) := 2 c G(x-z, t) + 2C_0, 
\]
where $C_0 :=\|\phi\|_{L^\infty(\R^N)}$.

\begin{proof}
By Lemma~\ref{lem:super}, it suffices to construct a supersolution of \eqref{eq:main}.
The assumption on $u_0$ gives 
\[
	\int_{\RN} G(x-y,t) d u_0(y) 
	\leq cG(x-z,t) + C_0 =\frac{1}{2} \tilde w(x,t). 
\]
From the same computations as \eqref{eq:Jtileq}, 
$G(y-z,s)\leq Cs^{-N/2}$ and \eqref{eq:Gxi}, it follows that 
\[
\begin{aligned}
	&\int_0^t \int_{\R^N} G(x-y,t-s) |y|^{-\gamma} \tilde w(y,s)^p  dyds \\
	&\leq  C c^p G(x-z,t) 
	\int_0^t \int_{\R^N} 
	G\left(y-\frac{s}{t}x-\frac{t-s}{t}z, \frac{s(t-s)}{t} \right)
	|y|^{-\gamma} G(y-z,s)^{p-1} dyds + C C_0^p t^{1-\frac{\gamma}{2}} \\
	&\leq 
	C c^p G(x-z,t)
	\int_0^t s^{-\frac{N}{2}(p-1)} 
	\left(\frac{s(t-s)}{t}\right)^{-\frac{\gamma}{2}} ds 
	+ C C_0^p t^{1-\frac{\gamma}{2}}. 
\end{aligned}
\]
Since $p<p_\gamma$, we have 
\[
	\int_0^t s^{-\frac{N}{2}(p-1)} 
	\left(\frac{s(t-s)}{t}\right)^{-\frac{\gamma}{2}} ds 
	\leq 
	C\int_0^{t/2} s^{-\frac{N}{2}(p-1)-\frac{\gamma}{2}} ds 
	+ C\int_{t/2}^t (t-s)^{{-\frac{N}{2}(p-1)-\frac{\gamma}{2}}} ds 
	\leq 
	C t^{\frac{N}{2}(p_\gamma-p)}. 
\]
Then, 
\[
	\Phi[\tilde w] \leq \frac{1}{2} \tilde w 
	+ C(c^{p-1}t^{\frac{N}{2}(p_\gamma-p)})c G(x-z,t)
	+CC_0^p t^{1-\frac{\gamma}{2}}, 
\]
where $C>0$ is a constant independent of $c$. 
By restricting the range of $t$ and taking a small constant $c>0$, 
we see that $\tilde w$ is a supersolution of \eqref{eq:main}. 
The proof is complete. 
\end{proof}

\section{Fractional Hardy parabolic equation}\label{sec:frac}
In this section, we consider the Cauchy problem for the fractional Hardy parabolic equation 
\begin{equation}\label{eq:fracmain}
\left\{ 
\begin{aligned}
	&\partial_t u+(-\Delta)^\frac{\theta}{2} u=|x|^{-\gamma}u^p
	&&\mbox{ in }\R^N\times(0,T), \\
	&u(\cdot,0)=u_0 &&\mbox{ in }\R^N, 
\end{aligned}
\right.
\end{equation}
where $N\geq1$, $0<\theta<2$, $p>1$, $0<\gamma<\min\{\theta,N\}$ 
and $0<T\leq \infty$. Set 
\[
	p_{F, \theta}:= 1+\frac{\theta}{N}, \qquad 
	p_{\gamma, \theta}:=1+\frac{\theta-\gamma}{N}
\]
and 
\begin{equation}\label{eq:fracpsizdef}
	\psi_\theta(z):=
	\left\{ 
	\begin{aligned}
	&|z|^\frac{\theta}{p-1}(1+|z|)^{-\frac{\theta-\gamma}{p-1}}
	&&\mbox{ if $p<p_{F, \theta}$ with $N\geq1$ or $p=p_{F, \theta}$ with $N\leq\theta$}, \\
	&|z|^\frac{\gamma}{p-1}  
	&&\mbox{ if $p=p_{F, \theta}$ with $N>\theta$ or $p>p_{F, \theta}$ with $N\geq1$}. 
	\end{aligned}
	\right.
\end{equation}

Our results for \eqref{eq:fracmain} are as follows.

\begin{theorem}\label{th:fracmain}
Fix $N\geq1$, $0<\theta<2$, $p>1$, $0<\gamma<\min\{\theta,N\}$, $0<T<\infty$ and 
$z\in \R^N\setminus\{0\}$. 
Let $\psi_\theta$ be as in \eqref{eq:fracpsizdef}. 
Assume either $u_0$ is a nonnegative Radon measure satisfying 
\[
	u_0=c \psi_\theta(z) \delta_z + \phi\qquad \mbox{ if }p<p_{F,\theta} 
\]
for a nonnegative function $\phi\in L^\infty(\R^N)$ 
with $\|\phi\|_{L^\infty(\R^N)}\leq C_0$, 
or $u_0$ is a nonnegative measurable function satisfying
\[
	u_0(x)\leq \left\{ 
	\begin{aligned}
	& c \psi_\theta(z) |x-z|^{-N} 
	\left( \log( e+|x-z|^{-1}) \right)^{-\frac{N}{\theta}-1}
	\chi_1(|x-z|) + C_0 
	&&\mbox{ if }p=p_{F, \theta}, \\
	& c \psi_\theta(z) |x-z|^{-\frac{\theta}{p-1}} + C_0 
	&&\mbox{ if }p>p_{F, \theta} \\
	\end{aligned}
	\right.
\]
for any $x\in \R^N\setminus\{z\}$. 
Here $c>0$ and $C_0\geq0$ are constants. 
Then there exist positive constants $c_*$ and $C_*$ 
depending on $N$, $\theta$, $p$ and $\gamma$ but not on $T$ and $z$ 
such that the following statements hold. 
If the constants $c$ and $C_0$ satisfy 
\[
	c\leq 
	\left\{
	\begin{aligned} 
	&c_* T^{-\frac{N(p_{F, \theta}-p)}{\theta(p-1)}} 
	(1+T^{1-\frac{\gamma}{\theta}  })^{-\frac{1}{p-1}} 
	&&\mbox{ if }p<p_{F, \theta}, \\
	&c_* (1+T^\frac{1}{\theta}+T^{1-\frac{\gamma}{\theta}})^{-\frac{1}{p-1}} 
	&&\mbox{ if }p=p_{F, \theta}, \\
	&c_* 
	&&\mbox{ if }p>p_{F, \theta}, 
	\end{aligned} 
	\right.
	\qquad 
	C_0 \leq C_* T^{-\frac{\theta-\gamma}{\theta(p-1)}}, 
\]
respectively, then \eqref{eq:fracmain} possesses a solution on $\R^N\times[0,T)$. 
In addition, if $p>p_{F,\theta}$, $c\leq c_*$ and $C_0=0$, 
then \eqref{eq:fracmain} possesses a solution on $\R^N\times[0,\infty)$. 
\end{theorem}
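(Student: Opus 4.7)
\section*{Proof plan for Theorem \ref{th:fracmain}}

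The plan is to mirror the argument developed for Theorem \ref{th:main} in Sections \ref{sec:rmain} and \ref{sec:rrrmain}, replacing the Gaussian heat kernel $G(x,t)$ by the fractional kernel $G_\theta(x,t)$ associated with the semigroup $e^{-t(-\Delta)^{\theta/2}}$. I would split the proof into $p\geq p_{F,\theta}$ and $p<p_{F,\theta}$ exactly as in the local case. For $p\geq p_{F,\theta}$, set
\[
\overline{u}(x,t):=K\, c\,\psi_\theta(z)\, U_\theta(x,t)+2C_0,\qquad
U_\theta(x,t):=H^{-1}\!\left(\int_{\R^N}G_\theta(x-y,2t)H(f_\theta(y))\,dy\right),
\]
with $H$ defined as in \eqref{eq:HX} (replacing the parameters by $1<\alpha<\min\{p,N(p-1)/\theta\}$, $0<\beta<N/\theta$), with $f_\theta$ the $\theta$-analogue of $f$ (i.e.\ $|x-z|^{-\theta/(p-1)}$ or the log-type singularity $|x-z|^{-N}(\log(e+|x-z|^{-1}))^{-N/\theta-1}\chi_1$), and with $K=2^{N/2+1}$ adjusted so that Jensen's inequality applied to the strict convexity of $H$ yields $\int G_\theta(x-y,t)u_0\,dy\le \tfrac12\overline{u}$. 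For $p<p_{F,\theta}$ I would instead take $\overline{w}(x,t):=K c\psi_\theta(z)G_\theta(x-z,2t)+2C_0$ in direct parallel with Section \ref{sec:rrrmain}.

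Before running the computations I need fractional analogues of the four heat-kernel lemmas. The pointwise bound
\[
G_\theta(x,t)\;\asymp\;\min\!\bigl(t^{-N/\theta},\;t\,|x|^{-N-\theta}\bigr)
\]
immediately yields the analogues of Lemmas \ref{lem:linxpoly} and \ref{lem:linxlog} (convolutions against $|y|^{-k}$ with $0<k<N$, and against the logarithmically corrected singularity), as well as of Lemma \ref{lem:lint}, with the scaling $t^{1/\theta}$ replacing $t^{1/2}$. These are standard and can be quoted from the literature on fractional diffusion, e.g.\ from \cite{HS, HI18}. The upshot is the replacements
\[
U_\theta(x,t)\lesssim |x-z|^{-\theta/(p-1)}\ \text{or}\ |x-z|^{-N}(\log(e+|x-z|^{-1}))^{-N/\theta},\qquad
\|U_\theta(\cdot,t)\|_\infty\lesssim t^{-1/(p-1)}\ \text{or analogous},
\]
which are the exact $\theta$-replicas of \eqref{eq:Uesti} and \eqref{eq:Uestiuni}.

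The main obstacle is the substitute for Lemmas \ref{lem:Gtra} and \ref{lem:Gmid}: the fractional kernel does not factor like the Gaussian, so the clean identity $G(x-y,t-s)G(y-\eta,s)=G(x-\eta,t)G(\cdot,s(t-s)/t)$ is unavailable. I would circumvent this by keeping Fubini's theorem but replacing the midpoint kernel by a direct pointwise estimate: using $G_\theta(x-y,t-s)G_\theta(y-\eta,s)\le G_\theta(x-\eta,t)\,M(y;x,\eta,t,s)$, where the multiplier $M$ can be bounded, after splitting the region of $y$ and distinguishing the regimes $s<t/2$ and $s>t/2$, by a constant multiple of $G_\theta(y-\eta,Cs)+G_\theta(y-x,C(t-s))$. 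This is the inequality whose Gaussian version is Lemma \ref{lem:Gmid}; for $G_\theta$ it follows from the explicit two-sided pointwise bound displayed above by elementary case analysis on which of $|y-\eta|$, $|y-x|$ dominates. Once this substitute is in hand, the computation leading to \eqref{eq:Jcla} reproduces verbatim with $|z|^{-\gamma}$, the exponents $(\theta-\gamma)/(\theta(p-1))$, and $t^{1/\theta}$ in place of $|z|^{-\gamma}$, $(2-\gamma)/(2(p-1))$, and $t^{1/2}$, respectively.

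With the above technical replacements, the remainder of the proof is a routine repetition: the splitting $\overline{J}=\overline{J}_1+\overline{J}_2$ over $|y|\le|z|/2$ and $|y|\ge|z|/2$ produces the bound $\overline{J}\lesssim t^{N(p_{F,\theta}-p)/\theta}(1+t^{1-\gamma/\theta})\psi_\theta(z)^{-(p-1)}$ in the subcritical case, while in the supercritical case the same tripartition (using monotonicity of $X\mapsto X^p/H(X)$ and of $X\mapsto H(X)/X$) yields $\Phi[\overline{u}]\le(\tfrac12+\tilde C c^{p-1}P(T))\overline{u}+(1+\tilde C C_0^{p-1}T^{1-\gamma/\theta})C_0$ with $P(T)=1$, $1+T^{1/\theta}+T^{1-\gamma/\theta}$, or $1$ depending on $p\gtreqless p_{F,\theta}$. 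Choosing $c$ and $C_0$ to make each bracket $\le 1$ gives exactly the thresholds of the theorem, and Lemma \ref{lem:super} (which carries over to the fractional equation) produces the solution; for $p>p_{F,\theta}$ with $C_0=0$ the $T$-free bound yields global-in-time solvability.
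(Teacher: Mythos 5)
Your plan is essentially the paper's own: the paper proves Theorem \ref{th:fracmain} by repeating the arguments of Sections \ref{sec:rmain} and \ref{sec:rrrmain} with $G_\theta$ in place of $G$, using two-sided pointwise bounds on $G_\theta$ for the analogues of Lemmas \ref{lem:linxpoly}--\ref{lem:lint} and replacing Lemmas \ref{lem:Gtra} and \ref{lem:Gmid} by exactly the product inequality $G_\theta(x-y,t-s)G_\theta(y-\eta,s)\leq CG_\theta(x-\eta,t)(G_\theta(y-\eta,s)+G_\theta(y-x,t-s))$ that you propose to establish. The only cosmetic difference is that you re-derive this inequality by case analysis from the bound $G_\theta(x,t)\asymp\min(t^{-N/\theta},t|x|^{-N-\theta})$, whereas the paper cites it directly from Bogdan--Jakubowski; both are valid.
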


\begin{remark}
We can improve $\psi_\theta$ to $|z|^{\gamma/(p-1)}$ for each case 
if $T$ and $z\neq0$ satisfy
$0<T\leq |z|^\theta$. 
\end{remark}

\begin{theorem}\label{th:fracorigin}
Fix $N\geq1$, $0<\theta<2$, $p>1$, $0<\gamma<\min\{\theta,N\}$, $0<T<\infty$ and 
$z\in \R^N$. 
Assume either $u_0$ is a nonnegative Radon measure satisfying 
\[
	u_0=c \delta_z + \phi \qquad \mbox{ if }p<p_{\gamma, \theta} 
\]
for a nonnegative function $\phi\in L^\infty(\R^N)$ 
with $\|\phi\|_{L^\infty(\R^N)}\leq C_0$, 
or $u_0$ is a nonnegative measurable function satisfying 
\[
	u_0(x)\leq \left\{ 
	\begin{aligned}
	& c |x-z|^{-N} 
	\left( \log ( e+|x-z|^{-1} ) \right)^{-\frac{N}{\theta-\gamma}-1}
	\chi_1(|x-z|) + C_0 
	&&\mbox{ if }p=p_{\gamma, \theta}, \\
	& c |x-z|^{-\frac{\theta-\gamma}{p-1}} + C_0 
	&&\mbox{ if }p>p_{\gamma, \theta}
	\end{aligned}
	\right.
\]
for any $x\in \R^N\setminus\{z\}$. 
Here $c>0$ and $C_0\geq0$ are constants. 
Then there exist positive constants $c_*$ and $C_*$ 
depending on $N$, $\theta$, $p$ and $\gamma$ but not on $T$ and $z$ 
such that the following statements hold. 
If the constants $c$ and $C_0$ satisfy 
\[
	c\leq 
	\left\{
	\begin{aligned} 
	&c_* T^{-\frac{N(p_{F, \theta}-p)}{\theta(p-1)}} &&\mbox{ if }p<p_{\gamma, \theta}, \\
	&c_* (1+T^\frac{1}{\theta})^{-\frac{1}{p-1}} &&\mbox{ if }p=p_{\gamma, \theta}, \\
	&c_* 
	&&\mbox{ if }p>p_{\gamma, \theta}, 
	\end{aligned} 
	\right.
	\qquad 
	C_0 \leq C_* T^{-\frac{\theta-\gamma}{\theta(p-1)}}, 
\]
respectively, then \eqref{eq:fracmain} possesses a solution on $\R^N\times[0,T)$. 
In addition, if $p>p_{\gamma, \theta}$, $c\leq c_*$ and $C_0=0$, 
then \eqref{eq:fracmain} possesses a solution on $\R^N\times[0,\infty)$. 
\end{theorem}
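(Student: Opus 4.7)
The plan is to mirror the proofs of Theorem~\ref{th:origin} given in Sections~\ref{sec:1.2sup} and~\ref{sec:1.2sub}, replacing the Gaussian heat kernel $G$ by the fractional kernel $G_\theta$ associated with $(-\Delta)^{\theta/2}$ and the scaling exponents $2,\gamma/2$ by $\theta,\gamma/\theta$ throughout. For $p\geq p_{\gamma,\theta}$ I would set
\[
V_\theta(x,t):=H^{-1}\!\left(\int_{\R^N} G_\theta(x-y,t)\,H(g_\theta(y))\,dy\right),\qquad \overline{v}_\theta(x,t):=2c\,V_\theta(x,t)+2C_0,
\]
with $H$ as in \eqref{eq:HX} but with $1<\alpha<\min\{p,N(p-1)/\theta\}$ and $0<\beta<N/\theta$, and $g_\theta$ obtained from $g$ by the substitutions $(2-\gamma)/(p-1)\mapsto(\theta-\gamma)/(p-1)$ and $N/(2-\gamma)\mapsto N/(\theta-\gamma)$. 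For $p<p_{\gamma,\theta}$ I would take $\widetilde{w}_\theta(x,t):=2c\,G_\theta(x-z,t)+2C_0$ in analogy with Section~\ref{sec:1.2sub}. It suffices to verify that these are supersolutions of the integral equation associated with \eqref{eq:fracmain} under the stated smallness of $c$ and $C_0$; Lemma~\ref{lem:super}, whose proof uses only the structure of the integral equation, then produces the desired solution.

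The first step is to transfer the estimates of Section~\ref{subsec:heat} to $G_\theta$. Starting from the two-sided bound $G_\theta(x,t)\asymp t^{-N/\theta}(1+t^{-1/\theta}|x|)^{-N-\theta}$ and scaling, one obtains the fractional analogues of Lemmas~\ref{lem:linxpoly} and~\ref{lem:linxlog}, together with the standard estimate $\int_{\R^N} G_\theta(y,\tau)|y|^{-\gamma}\,dy\leq C\,\tau^{-\gamma/\theta}$; Lemma~\ref{lem:lint} itself is already valid for general $0<\theta\leq 2$ by \cite[Lemma~2.1]{HI18}. Combined with the monotonicity properties \eqref{eq:monoX} of $H$ these yield the fractional counterpart of \eqref{eq:Vestiuni}, namely $V_\theta(\cdot,t)\leq C\,t^{-(\theta-\gamma)/(\theta(p-1))}$ for $p>p_{\gamma,\theta}$ and $V_\theta(\cdot,t)\leq C\,t^{-N/\theta}(\log(e+t^{-1/\theta}))^{-N/(\theta-\gamma)}$ for $p=p_{\gamma,\theta}$.

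The main obstacle is that the exact factorization of Lemma~\ref{lem:Gtra} fails for the fractional kernel, so the representation \eqref{eq:Jtileq} on which Section~\ref{sec:1.2sup} relies must be rebuilt. A clean substitute is to apply the identity $H(V_\theta(y,s))=\int G_\theta(y-\eta,s)H(g_\theta(\eta))\,d\eta$ together with the monotonicity of $X\mapsto X^p/H(X)$ in \eqref{eq:monoX} to obtain the pointwise bound
\[
V_\theta(y,s)^p \leq \frac{\|V_\theta(\cdot,s)\|_\infty^p}{H(\|V_\theta(\cdot,s)\|_\infty)}\,H(V_\theta(y,s)),
\]
and then to control the resulting triple-kernel integral $\int G_\theta(x-y,t-s)G_\theta(y-\eta,s)|y|^{-\gamma}\,dy$ via a fractional 3G-type estimate (available from the polynomial two-sided bound for $G_\theta$) followed by the scaling inequality above. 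This plays the role of $\widetilde J$ in Section~\ref{sec:1.2sup}. For the $p<p_{\gamma,\theta}$ case one mimics Section~\ref{sec:1.2sub}, using the uniform bound $G_\theta(y-z,s)^{p-1}\leq C\,s^{-N(p-1)/\theta}$ to estimate the nonlinear contribution directly.

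Once the fractional analogue of $\widetilde J$ is in hand, the time integration is elementary: the bound
\[
\int_0^t s^{-N(p-1)/\theta}\Bigl(\frac{s(t-s)}{t}\Bigr)^{-\gamma/\theta}\,ds \leq C\,t^{N(p_{\gamma,\theta}-p)/\theta}\qquad (p<p_{\gamma,\theta})
\]
and its critical/supercritical counterparts reproduce the time factors $T^{N(p_{F,\theta}-p)/(\theta(p-1))}$, $(1+T^{1/\theta})^{-1/(p-1)}$, and $T^{-(\theta-\gamma)/(\theta(p-1))}$ appearing in the statement. Combined with the elementary estimate $\int_0^t\!\!\int G_\theta(x-y,t-s)|y|^{-\gamma}C_0^p\,dy\,ds\leq C\,C_0^p\,t^{1-\gamma/\theta}$ for the additive constant part, one concludes that $\overline{v}_\theta$ (respectively $\widetilde{w}_\theta$) is a supersolution on $\R^N\times[0,T)$ under the stated smallness conditions, and the global existence for $p>p_{\gamma,\theta}$ with $C_0=0$ follows at once since the resulting supersolution inequality is then uniform in $t$.
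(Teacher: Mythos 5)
Your proposal is correct and follows essentially the same route as the paper: the authors likewise build the supersolutions of Sections~\ref{sec:1.2sup}--\ref{sec:1.2sub} with $G_\theta$ in place of $G$ and replace the exact factorization of Lemmas~\ref{lem:Gtra}--\ref{lem:Gmid} by the Bogdan--Jakubowski inequality $G_\theta(x-y,t-s)G_\theta(y-\eta,s)\leq CG_\theta(x-\eta,t)\bigl(G_\theta(y-\eta,s)+G_\theta(y-x,t-s)\bigr)$, which is precisely the ``fractional 3G-type estimate'' you invoke. The one adjustment worth recording is that for the origin case the admissible range should be $1<\alpha<\min\{p,\,N(p-1)/(\theta-\gamma)\}$ rather than $1<\alpha<\min\{p,\,N(p-1)/\theta\}$, since otherwise no $\alpha$ exists when $p_{\gamma,\theta}<p\leq p_{F,\theta}$ (the paper commits the analogous slip by reusing \eqref{eq:HX} verbatim in Section~\ref{sec:1.2sup}, and the fix is exactly what local integrability of $H(g_\theta)$ demands).
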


\begin{remark}
We remark that each of the singularities in 
Theorems {\rm \ref{th:fracmain}} and {\rm \ref{th:fracorigin}} 
is optimal at least we additionally assume that 
$\theta$, $p$ and $\gamma$ satisfy $0<\gamma<\theta(p-1)$. 
This assumption is needed to show necessary conditions 
for the existence of solutions of \eqref{eq:fracmain}. 
For more details of necessary conditions, see {\rm \cite{HS}}. 
\end{remark}

The key to prove the above theorems is the following Lemma. 

\begin{lemma}[{\cite[Inequality (9)]{BJ}}]
Let $G_\theta$ be the fundamental solution of 
the fractional heat equation
$\partial_t u + (-\Delta)^{\theta/2} u = 0$ in $\RN\times(0,\infty)$. 
Then there exists $C>0$ depending only on $N$ and $\theta$ such that, 
for any $x,y,\eta\in\RN$ and $0<s<t$, 
\[
G_\theta(x-y,t-s)G_\theta(y-\eta,s) \leq CG_\theta(x-\eta,t)(G_\theta(y-\eta,s) + G_\theta(y-x,t-s)). 
\]
\end{lemma}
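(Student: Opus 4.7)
The plan is to reduce the inequality to the two-sided pointwise estimate for the fractional heat kernel,
\[
c_1\,\frac{t}{(t^{1/\theta}+|x|)^{N+\theta}}\leq G_\theta(x,t)\leq c_2\,\frac{t}{(t^{1/\theta}+|x|)^{N+\theta}},\qquad x\in\RN,\ t>0,
\]
with $c_1,c_2>0$ depending only on $N$ and $\theta$. Setting $\rho(x,t):=t(t^{1/\theta}+|x|)^{-N-\theta}$, the claim reduces, up to universal constants, to
\[
\rho(x-y,t-s)\,\rho(y-\eta,s)\ \leq\ C\,\rho(x-\eta,t)\bigl(\rho(y-\eta,s)+\rho(y-x,t-s)\bigr).
\]
Since $\rho$ depends only on the norm of its spatial argument, and since $ab\asymp \min(a,b)(a+b)$ for positive reals, this is further equivalent, up to constants, to the cleaner bound
\[
\min\bigl(\rho(x-y,t-s),\,\rho(y-\eta,s)\bigr)\ \leq\ C\,\rho(x-\eta,t).
\]

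I would prove this minimum estimate by a case analysis on the geometric and temporal scales. The triangle inequality forces at least one of $|x-y|\geq|x-\eta|/2$ or $|y-\eta|\geq|x-\eta|/2$, and trivially at least one of $t-s\geq t/2$ or $s\geq t/2$ holds. When the ``large'' distance and the ``large'' time fall on the same pair---say $|x-y|\geq|x-\eta|/2$ and $t-s\geq t/2$---the comparison $(t-s)^{1/\theta}+|x-y|\geq C(t^{1/\theta}+|x-\eta|)$ combined with $t-s\leq t$ gives $\rho(x-y,t-s)\leq C\rho(x-\eta,t)$ directly; the other matched pair is symmetric.

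The main obstacle is the mismatched case, e.g.\ $|x-y|\geq|x-\eta|/2$ together with $s\geq t/2$, in which neither pair inherits both favorable conditions simultaneously. Here one must split further according to whether $|x-\eta|\geq t^{1/\theta}$ (heavy-tail regime) or $|x-\eta|<t^{1/\theta}$ (diffusive regime). In the heavy-tail regime, $\rho(x-y,t-s)\leq C(t-s)|x-\eta|^{-N-\theta}\leq Ct|x-\eta|^{-N-\theta}\leq C\rho(x-\eta,t)$. In the diffusive regime, one uses instead $s^{1/\theta}\asymp t^{1/\theta}\geq|x-\eta|$ to conclude $\rho(y-\eta,s)\leq Cs\,(s^{1/\theta})^{-N-\theta}\leq Ct^{-N/\theta}\leq C\rho(x-\eta,t)$. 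Assembling the cases yields the minimum bound and hence the lemma. Unlike the Gaussian identity in Lemma \ref{lem:Gtra}, no exact product decomposition is available for $G_\theta$; the heavy-tail/diffusive dichotomy is precisely what replaces it, and is the true source of the constant $C$.
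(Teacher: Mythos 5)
Your proof is correct. Note that the paper offers no proof of this lemma at all: it is quoted verbatim as Inequality (9) of Bogdan--Jakubowski \cite{BJ}, so there is no in-paper argument to compare against; what you have written is a self-contained derivation, and it is essentially the standard ``3P-type'' argument for stable kernels. The two key reductions are both sound: the two-sided bound $G_\theta(x,t)\asymp t(t^{1/\theta}+|x|)^{-N-\theta}$ (valid for all $0<\theta<2$, and consistent with the references \cite{BGS,Su} the paper itself invokes) legitimately replaces $G_\theta$ by $\rho$, and the identity $ab=\min(a,b)\max(a,b)\le\min(a,b)(a+b)$ together with the symmetry $\rho(x-y,\cdot)=\rho(y-x,\cdot)$ correctly reduces the claim to $\min(\rho(x-y,t-s),\rho(y-\eta,s))\le C\rho(x-\eta,t)$. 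Your case analysis is exhaustive: if neither matched pair (large distance with large time on the same factor) occurs, one is necessarily in one of the two mismatched configurations, and in each the heavy-tail bound $\rho\le t|x-\eta|^{-N-\theta}$ (using the large-distance factor) and the diffusive bound $\rho\le s^{-N/\theta}\le Ct^{-N/\theta}$ (using the large-time factor) each dominate $\rho(x-\eta,t)$ in the appropriate regime. The only cosmetic gap is that you treat one mismatched case explicitly and leave the other to symmetry under $(x,t-s)\leftrightarrow(\eta,s)$ without saying so; spelling that out would complete the write-up. Your closing remark is also apt: the absence of a Gaussian-type product identity (Lemma \ref{lem:Gtra}) for $G_\theta$ is exactly why the fractional case needs this comparison inequality rather than an exact factorization.
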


By applying this lemma instead of Lemmas \ref{lem:Gtra} and \ref{lem:Gmid} 
and using upper and lower estimates of $G_\theta$ (see for instance \cite{BGS, Su}), 
we can prove Theorems \ref{th:fracmain} and \ref{th:fracorigin} 
in much the same way as Theorems \ref{th:main} and \ref{th:origin}. 
We leave the details to the reader.

\section*{Acknowledgment} 
The first author was supported in part by JSPS KAKENHI Grant Number JP19H05599.
The second author was supported in part by JSPS KAKENHI Grant Number 19K14567.





\end{document}